\newtheorem{theorem}{Theorem}[section]           
\newtheorem{corollary}[theorem]{Corollary}       
\newtheorem{lemma}[theorem]{Lemma}               
\theoremstyle{definition} 
\newtheorem{definition}[theorem]{Definition}     
\newtheorem{example}[theorem]{Example}           
\theoremstyle{remark}
\newtheorem{remark}[theorem]{Remark}             
\title{On the Refinement of Certain Statistics on Alternating Words}
\author{Chia-An Hsu, Hsu-Lin Chien, Han-Chun Chan,\\ Bin-Shun Sun, and Yuan-Ting Huang}
\date{}
\begin{document}
\maketitle

\begin{abstract}
    In this paper, we investigate statistics on alternating words under correspondence between ``possible reflection paths within several layers of glass'' and ``alternating words''.  For $v=(v_1,v_2,\cdots,v_n)\in\mathbb{Z}^{n}$, we say $P$ is a path within $n$ glass plates corresponding to $v$, if $P$ has exactly $v_i$ reflections occurring at the $i^{\rm{th}}$ plate for all $i\in\{1,2,\cdots,n\}$.
    We give a recursion for the number of paths corresponding to $v$ satisfying $v \in \mathbb{Z}^n$ and $\sum_{i\geq 1} v_i=m$. Also, we establish recursions for statistics around the number of paths corresponding to a given vector $v\in\mathbb{Z}^n$ and a closed form for $n=3$.
    Finally, we give a equivalent condition for the existence of path corresponding to a given vector $v$.
\end{abstract}

\section{Introduction}
 There are several results about reflections across glass plates. For example, if $a_m$ is the number of distinct paths within $3$ glass plates, such that there are exactly $m$ reflections, then the recursion of sequence $\{a_m\}_{m\in\mathbb{N}}$ is $a_m=a_{m-1}+a_{m-2}$. In \cite{junge1973polynomials}, Junge and Hoggatt generalized the statistic $a_n$ to $m$ plates, and study a related matrix equation and its characteristic polynomial. In \cite{hoggatt1979reflections}, Hoggatt considered statistics in paths of fixed length. In present paper, we will focus on other statistics which are also quite natural.
\begin{definition}\label{def}
Within $n$ glass plates: 
\begin{enumerate}
    \item We say $P$ is a \emph{path between $n$ glass plates} if $P$ is a polygonal curve with peaks and valleys lying in some plates. In this case, peaks and valleys are called reflections.
    \item For $i<n$, we say a path $P$ \emph{starts at $i^{\rm{th}}$ plate} if the first reflection occurs at $j^{\rm{th}}$ plate such that $i<j$. 
    \item We say $P$ is a path \emph{corresponding to a given vector $v=(v_1,v_2,\cdots,v_n)\in\mathbb{Z}^n$} if $P$ starts at first plate and has exactly $v_i$ reflections at the $i^{\rm{th}}$ plate for all $i\in\{1,2,\cdots,n\}$. On the other hand, we call $v(P)$ the vector determined by path $P$.
    \item For $v \in \mathbb{Z}^n$, $N(v)$ is the number of paths  $P$ such that $v(P)=v$. 
    \item $a^n_m$ is the number of paths $P$ such that $\sum v(P)=m$.
    \item $b^n_m$ is the number of $v$ satisfying $v=v(P)$ and $\sum v(P)=m$ for some path $P$.
\end{enumerate}
\end{definition}

\begin{example}\label{exa:a^3_4}
The first two rows of figure \ref{fig:a^3_4} show the distinct $a^3_4=8$ paths and their corresponding vectors within $3$ glass plates with $4$ reflections. Note that the vector $(2,1,1)$ is corresponding to $2$ different paths, so we have $N(2,1,1)=2$. Moreover, since the $8$ paths corresponding to only $6$ different vectors, we get $b^3_4=6$.
\end{example}

\begin{figure}[htp]
    \centering
    \includegraphics[width=14cm]{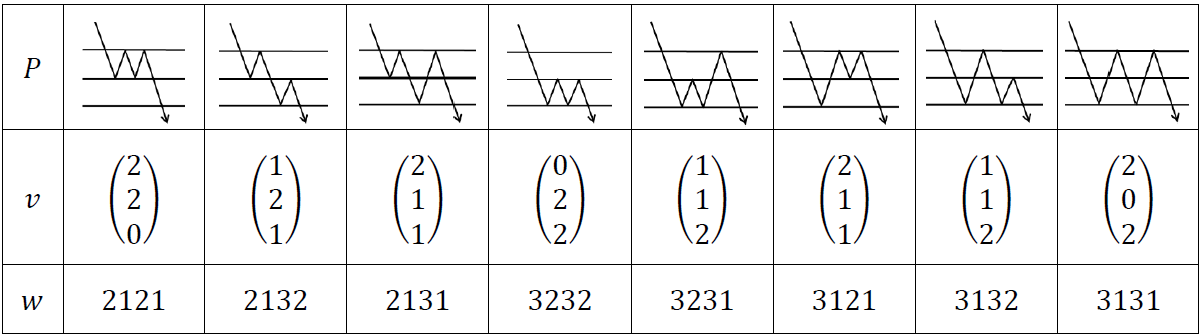}
    \caption{Paths within $3$ glass plates with $4$ reflections}
    \label{fig:a^3_4}
\end{figure}

A word $w=w_1 w_2 \cdots w_n$ is (down-up) alternating if $w_1>w_2<w_3>w_4<\cdots$. If $N_{n,m}$ is the number of alternating words of length $m$ over alphabet $\{1,2,\cdots,n\}$, then $N_{n,m}=a^n_m$ for all $m\ge2$, $n\ge 2$ because of one-to-one correspondence between paths within plates and alternating words (see figure \ref{fig:a^3_4}). Moreover, in view of alternating words, $b^n_m$ is the number of equivalence classes where $w_1$ and $w_2$ are related if the numbers of each alphabet in $w_1$ and $w_2$ are the same respectively. In figure \ref{fig:a^3_4}, we see $2131$ and $3121$ are equivalent since they both contain two ``$1$'', one  ``$2$'' and one ``$3$''. $N(v)$ is the number of words with $i$ occurring $v_i$ times. See \cite{gao2016pattern} for more information about alternating words. 

\section{A closed form of $a^n_m$}
\begin{definition}
We define $a^n_{m,j}$ as the number of paths within $n$ glass plates which start at the first plate with $m$ reflections, and ``the last reflection'' occur at the $j^{\rm{th}}$ plate.  
\end{definition}

From definition, we know that $a^n_m=\sum_{j=1}^n a^n_{m,j}$. Moreover, it is easy to see $a^n_{2i,j}=\sum_{j<j'} a^n_{2i-1,j'}$ and $a^n_{2i+1,j}=\sum_{j'<j} a^n_{2i,j'}$ which lead to the following theorem.

\begin{theorem}
For positive integers $n\ge 2$, $m\ge 1$, and $1\le j\le n$, we have
\[a^n_{m,j}=\sum_{\ell=0}^{\lfloor{\frac{m-1}{2}}\rfloor}(-1)^{\ell+{\lfloor{\frac{m-1}{2}}\rfloor}}a_{\ell} C(n-j+\lfloor{\frac{m-1}{2}}\rfloor-\ell,m-1-2\ell)\]
where $a_{\ell}=\sum_{i=1}^{\ell}(-1)^{i+1}C(n+i-1,2i)\times a_{\ell-i}$ with $a_0=1$, and $C(\alpha,\beta)={\alpha !}/[\beta !(\alpha-\beta)!]$.
\end{theorem}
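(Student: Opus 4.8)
The plan is to prove the identity by induction on $m$, exploiting the two summation recurrences $a^n_{2i,j}=\sum_{j'>j}a^n_{2i-1,j'}$ and $a^n_{2i+1,j}=\sum_{j'<j}a^n_{2i,j'}$ recorded just before the statement. First I would pass to the reflected index $s=n-j$ and set $\tilde a_{m,s}=a^n_{m,n-s}$, which turns a suffix sum into a prefix sum and vice versa, making the two parity cases symmetric. Writing $k=\lfloor(m-1)/2\rfloor$, the target becomes $\tilde a_{m,s}=\sum_{\ell=0}^{k}(-1)^{\ell+k}a_\ell\,C(s+k-\ell,m-1-2\ell)$. The base case $m=1$ gives $k=0$ and $\tilde a_{1,s}=C(s,0)=1$, matching $a^n_{1,j}=1$.

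For the inductive step I would treat the two transitions separately. Going from $m=2i-1$ to $m=2i$ the index $k=i-1$ is unchanged, and one evaluates $\tilde a_{2i,s}=\sum_{s'=0}^{s-1}\tilde a_{2i-1,s'}$ by the hockey-stick identity $\sum_{t=c}^{c+s-1}C(t,d)=C(s+c,d+1)-C(c,d+1)$ applied termwise with $c=k-\ell$ and $d=2i-2-2\ell$. The main term reproduces the claimed formula with the lower binomial argument raised by one, while the boundary term $C(k-\ell,\,2i-1-2\ell)$ vanishes identically because its lower entry exceeds its upper entry for every $\ell\le k$. Thus this transition is essentially automatic.

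The substance lies in the transition from $m=2i$ to $m=2i+1$, where $k$ increases from $i-1$ to $i$. Here $\tilde a_{2i+1,s}=\sum_{s'=s+1}^{n-1}\tilde a_{2i,s'}$, and because $s'$ is bounded above by $n-1$, the hockey-stick summation now produces a genuine boundary contribution $C(n+i-1-\ell,\,2i-2\ell)$ in addition to the $s$-dependent term. The $s$-dependent part matches the $\ell=0,\dots,i-1$ terms of the target, and the $\ell=i$ term of the target is the constant $a_i$; so the step reduces to the single scalar identity $\sum_{\ell=0}^{i-1}(-1)^{\ell+i-1}a_\ell\,C(n+i-1-\ell,\,2i-2\ell)=a_i$. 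Reindexing by $p=i-\ell$ rewrites the left side as $\sum_{p=1}^{i}(-1)^{p+1}C(n+p-1,2p)\,a_{i-p}$, which is exactly the defining recurrence for $a_i$.

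I expect this identification of the finite-alphabet boundary term with the recursion for $a_\ell$ to be the crux of the argument: it is precisely the truncation at the $n^{\rm th}$ plate (the upper limit $n-1$ in the suffix sum) that forces the coefficients $a_\ell$ to obey their stated recurrence, whereas the lower-limit boundary term in the other transition vanishes harmlessly. Everything else amounts to the two hockey-stick evaluations and careful bookkeeping of the floor function $\lfloor(m-1)/2\rfloor$ across the two parities; I would verify the bookkeeping on the small cases $n=3$, $m\le 4$ (where $a^3_4=8$ as in Example~\ref{exa:a^3_4}) to guard against sign and index slips.
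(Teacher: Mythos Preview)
Your proposal is correct and follows essentially the same route as the paper's own proof: induction on $m$, a hockey-stick evaluation of the partial sum in each parity case, the observation that the lower boundary term in the even step vanishes, and the identification of the upper boundary term in the odd step with the defining recurrence for $a_\ell$. The only difference is your cosmetic reindexing $s=n-j$, which the paper omits.
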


\begin{proof}
We give a proof by induction on $m$. It is easy to see that $a^n_{1,j}=C(n-j,0)$ and $a^n_{2,j}=C(n-j,1)$ which satisfy the formula. If $m$ is even, say $m=2r$, then by induction hypothesis and the recursion mentioned above, we have
\begin{equation}\nonumber
    \begin{aligned}
    a^n_{2r,j}&=\sum_{j'=j+1}^n a^n_{2r-1,j'}=\sum_{j'=j+1}^n\sum_{\ell=0}^{r-1}(-1)^{\ell+r-1}a_{\ell} C(n-j'+r-1-\ell,2r-2-2\ell)\\
    &=\sum_{\ell=0}^{r-1}(-1)^{\ell+r-1}a_{\ell} \sum_{j'=j+1}^n C(n-j'+r-1-\ell,2r-2-2\ell)\\
    &=\sum_{\ell=0}^{r-1}(-1)^{\ell+r-1}a_{\ell} C(n-j+r-1-\ell,2r-1-2\ell).
    \end{aligned}
    \end{equation}
Similarly, if $m$ is odd, say $m=2r+1$, we have    
\begin{equation}\nonumber
    \begin{aligned}
    a^n_{2r+1,j}&=\sum_{\ell=0}^{r-1}(-1)^{\ell+r-1}a_{\ell} \sum_{j'=1}^{j-1} C(n-j'+r-1-\ell,2r-1-2\ell)\\
    &=\sum_{\ell=0}^{r-1}(-1)^{\ell+r-1}a_{\ell} [C(n+r-\ell-1,2r-2\ell)-C(n-j+r-\ell,2r-2\ell)]\\
    &=\sum_{\ell=0}^{r-1}(-1)^{\ell+r}a_{\ell} C(n-j+r-\ell,2r-2\ell)+\sum_{\ell=0}^{r-1}(-1)^{\ell+r-1}a_{\ell} C(n+r-\ell-1,2r-2\ell)\\
    &=\sum_{\ell=0}^{r-1}(-1)^{\ell+r}a_{\ell} C(n-j+r-\ell,2r-2\ell)+a_r\\
    &=\sum_{\ell=0}^{r}(-1)^{\ell+r}a_{\ell} C(n-j+r-\ell,2r-2\ell).
    \end{aligned}
    \end{equation}    
\end{proof}

\section{The generating function and recursions of $N(v)$}
In this section, our goal is that for $v=(v_1,v_2,\cdots,v_n)\in\mathbb{Z}^n$, we want to find the generating function $D$ of $N(v)$ such that
\[D(t_1,t_2,\cdots,t_n)=\sum_{(v_1,v_2,\cdots,v_n)\in\mathbb{Z}^n}N(v_1,v_2,\cdots,v_n)\cdot t_1^{v_1}t_2^{v_2}\cdots t_n^{v_n}.\]
For this purpose, we have the following definitions.
\begin{definition}
We define $D^n_i$ to be the generating function corresponding to the number of possible paths within $n$ plates starting at the $i^{\rm{th}}$ plate.
\end{definition}

\begin{definition}
For every function of form $D(t_1,t_2,\cdots,t_n)$, we define \[D'(t_1,t_2,\cdots,t_n)=D(t_n,t_{n-1},\cdots,t_1).\]
\end{definition}

\begin{lemma}\label{lem:D^2_1}
The formula of the generating function $D_1^2$ is given by $$D^2_1=\frac{1+t_2}{1-t_1t_2}.$$
\end{lemma}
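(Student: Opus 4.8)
The plan is to enumerate directly the paths—equivalently, the down-up alternating words over the alphabet $\{1,2\}$—that start at the first plate, and then to sum the resulting monomials as two geometric series. First I would observe that ``starting at the first plate'' forces the first reflection to occur at a plate $j$ with $j>1$; since the only available plates are $\{1,2\}$, every nonempty admissible word must begin with the letter $2$. The alternating condition $w_1>w_2<w_3>\cdots$ then propagates deterministically: after a $2$ the next letter must be the strictly smaller $1$, after a $1$ the next must be the strictly larger $2$, and so on. Hence the complete list of admissible paths is the empty path together with $2,\ 21,\ 212,\ 2121,\dots$, that is, all prefixes of the infinite word $212121\cdots$.

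Next I would record, for each such word, its contribution $t_1^{v_1}t_2^{v_2}$ to $D_1^2$, where $v_i$ counts the occurrences of the letter $i$. Splitting by the parity of the length is convenient: a word of even length $2k$ is $\underbrace{2121\cdots21}_{2k}$ and contributes $(t_1t_2)^k$ (with $k=0$ giving the empty path and hence the constant term), while a word of odd length $2k+1$ is $\underbrace{2121\cdots2}_{2k+1}$ and contributes $t_2(t_1t_2)^k$. Summing the two families gives
\begin{equation}\nonumber
D_1^2=\sum_{k\ge 0}(t_1t_2)^k+t_2\sum_{k\ge 0}(t_1t_2)^k=\frac{1+t_2}{1-t_1t_2},
\end{equation}
as claimed.

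The only points requiring care, rather than a genuine obstacle, are matters of bookkeeping: one must confirm that the empty path is the intended source of the constant term $1$ in the formula, and that no word produces a negative exponent, so that although $v$ ranges over $\mathbb{Z}^2$ only the nonnegative lattice points carry nonzero $N(v)$. I would also note that the same answer follows from a one-step recursion, obtained by conditioning on whether the path is empty, is the single word $2$, or is built from a shorter admissible path by prepending the block $21$; this yields $D_1^2=1+t_2+t_1t_2\,D_1^2$ and the same closed form, and it is this recursive viewpoint that I expect to generalize to larger $n$ in the sequel.
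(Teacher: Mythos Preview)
Your argument is correct. You enumerate the admissible words over $\{1,2\}$ explicitly, observe that they are exactly the prefixes of $2121\cdots$, and sum two geometric series. This is a genuinely different route from the paper's proof. The paper instead sets up a recursion that invokes the reversal operation $D\mapsto D'$ defined just before the lemma: conditioning on whether the path is empty or begins with a reflection at plate~$2$ gives $D_1^2=1+t_2\,D_1^{2\prime}$, and applying the prime yields $D_1^{2\prime}=1+t_1\,D_1^2$; substituting one into the other produces the closed form. Your direct enumeration is cleaner and entirely self-contained for $n=2$, but the paper's use of the prime operation is not incidental: it is precisely this device (a reflection at the outermost plate followed by a \emph{reversed} path) that drives the general recursion for $D_1^n$ in the subsequent theorem. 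Your alternative recursion $D_1^2=1+t_2+t_1t_2\,D_1^2$, obtained by prepending the block $21$, is yet a third approach and is also correct, but note that it is not the recursion the paper generalizes; for larger $n$ the first ``block'' is not of fixed shape, and the paper's reversal trick is what keeps the recursion manageable.
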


\begin{proof}
We classify all paths into two classes: the paths which have no reflection, and those with first reflection occurring at the $2^{\rm{nd}}$ plate. Then we have
\begin{equation}\label{D^2_1}
    D_{1}^{2} = 1+t_{2} D_{1}^{2}{'}.
\end{equation}
and, taking prime on both sides,
\begin{equation}\label{D^2_1'}
    D^2_1{'}=1+t_1D^2_1.
\end{equation}
From (\ref{D^2_1}) and (\ref{D^2_1'}), we have
\[D^2_1=1+t_2(1+t_1D^2_1)\]
and the conclusion follows.
\end{proof}

\newpage
\begin{remark}
Note that $D^2_1=({1+t_2})/({1-t_1t_2})=(1+t_2)\sum_{i\ge 0}t_{1}^{i}t_{2}^{i}$. It implies that each of the vectors $(0,0),(0,1),(1,1),(1,2),(2,2),(2,3),\cdots$ corresponds to exact one path respectively and the others have no corresponding path. Figure \ref{fig:D^2_1} justifies this result.
\end{remark}

\begin{figure}[htp]
    \centering
    \includegraphics[width=10cm]{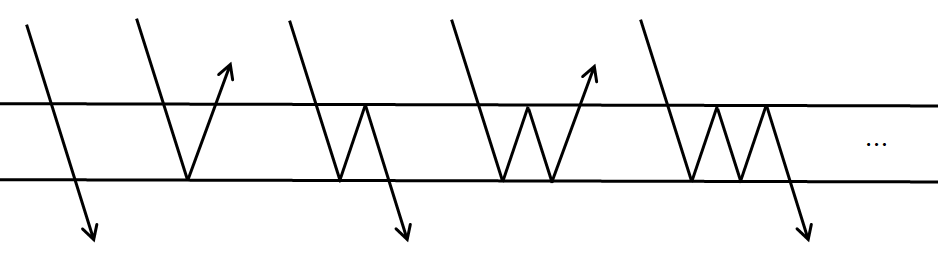}
    \caption{Paths within $2$ glass plates}
    \label{fig:D^2_1}
\end{figure}

The technique in proof of \ref{lem:D^2_1} can be applied to compute $D^n_1$ for $n\ge 3$. However, this process can be very complicated for large $n$. Next, we will provide another recursion for later discussion instead.

\begin{lemma}\label{lem:D^n_i}
For positive integers $n\ge 2$, $i\le n$, the generating functions $D^n_i$ satisfies
\[D^n_i=D^n_1-\sum_{j=2}^{i}t_j-\sum_{1\le a<b\le i}t_b t_a D^n_a\]
\end{lemma}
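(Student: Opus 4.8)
The plan is to prove the identity by establishing a single ``one-step'' recursion relating consecutive generating functions $D^n_{i-1}$ and $D^n_i$, and then assembling the claimed formula by telescoping (equivalently, by induction on $i$). The base case $i=1$ is the tautology $D^n_1=D^n_1$, since both sums on the right-hand side are then empty.

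For the one-step recursion I would classify the paths enumerated by $D^n_{i-1}$, that is, those starting at the $(i-1)^{\rm th}$ plate, according to the location of their first reflection. By definition such a path is either empty or has its first reflection (a valley) at a plate of index $\ge i$. The empty path together with those whose first reflection occurs at a plate of index $>i$ are precisely the paths starting at the $i^{\rm th}$ plate, and hence contribute exactly $D^n_i$. The remaining paths are those whose first reflection occurs at the $i^{\rm th}$ plate; each contributes a factor $t_i$ times the generating function $U$ of whatever follows that first valley. This yields
\[ D^n_{i-1}=D^n_i+t_i\,U. \]

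The key computation is to identify $U$. After the valley at plate $i$ the path travels upward, so its next reflection, if any, is a peak at some plate of index $a$ with $1\le a\le i-1$; such a peak contributes $t_a$, and everything after it is a path that descends from plate $a$, i.e. a path starting at the $a^{\rm th}$ plate, contributing exactly $D^n_a$ (in the unreversed orientation, so that no prime is needed). Allowing also the path to terminate immediately after the valley gives
\[ U=1+\sum_{a=1}^{i-1}t_a\,D^n_a. \]
Substituting this into the one-step recursion and telescoping from $2$ up to $i$ produces $D^n_i=D^n_1-\sum_{k=2}^{i}t_k-\sum_{2\le k\le i}\sum_{1\le a<k}t_k t_a D^n_a$, and relabelling $k=b$ turns the double sum into $\sum_{1\le a<b\le i}t_b t_a D^n_a$, which is the assertion.

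I expect the main obstacle to be the careful bookkeeping in the identification of $U$: one must verify that the alternation of peaks and valleys forces the reflection following the first valley to lie strictly above plate $i$ (so that the index $a$ ranges over exactly $\{1,\dots,i-1\}$), and that the descending tail is genuinely enumerated by $D^n_a$ rather than by a reflected version of it. This is precisely where the orientation conventions matter; an alternative, and perhaps safer, route is to express $U$ through the prime operation and the reflection symmetry already exploited for $D^2_1$ in Lemma \ref{lem:D^2_1}, but the direct peak-classification above has the advantage of avoiding the prime altogether.
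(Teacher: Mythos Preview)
Your proof is correct and uses essentially the same combinatorial decomposition as the paper: classify the paths in $D^n_1\setminus D^n_i$ by the location $b\in\{2,\dots,i\}$ of their first reflection, and then by whether the path stops there (giving $t_b$) or has a second reflection at some $a<b$ followed by a path starting at plate $a$ (giving $t_b t_a D^n_a$). The only difference is organizational: you isolate the single increment $D^n_{k-1}-D^n_k$ and telescope, whereas the paper writes down the full difference $D^n_1-D^n_i$ in one shot; the underlying argument and the resolution of the orientation issue (that the tail after the peak at $a$ is genuinely $D^n_a$, not $D^{n\prime}_a$) are identical.
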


\begin{proof}
The set of all paths starting at the $i^{\rm{th}}$ plate, is equal to the set of all paths starting at the first plate but without those paths with first reflection occurring at $j^{\rm{th}}$ plate for some $j<i$. Moreover, the set of paths with first reflection occurring at $j^{\rm{th}}$ plate for some $j<i$ consists of paths with only one reflection and those have at least two reflections. So we have
$D^n_i=D^n_1-(\sum_{j=2}^{i}t_j+\sum_{1\le a<b\le i}t_b t_a D^n_a)$
\end{proof}

\begin{theorem}\label{thm:D^n_1}
For positive integer $n\ge 2$, the generating functions $D^{n}_1$ satisfy
\[D^{n}_{1}=\frac{nu(D^{n-1}_{1})+t_{n}D_{1}^{n\prime}}{de (D^{n-1}_1)}\]
where $nu(D^{n-1}_1)$ is the numerator of $D^{n-1}_1$, and $de(D^{n-1}_1)$ is the denominator of $D^{n-1}_1$.
\end{theorem}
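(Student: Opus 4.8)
The plan is to run a ``first reflection'' decomposition, exactly the idea behind Lemma \ref{lem:D^2_1}, but for general $n$, and then to feed the result into Lemma \ref{lem:D^n_i}. Classifying every path that starts at the first plate according to the plate $k$ of its first reflection (necessarily a peak, with $2\le k\le n$), or no reflection at all, and observing that after a peak at plate $k$ the reversal of the plate order turns the continuation into a path starting at plate $n+1-k$, I obtain the recursion
\[D^n_1=1+\sum_{k=2}^{n}t_k\,(D^n_{n+1-k})'.\]
For $n=2$ this is precisely equation (\ref{D^2_1}). The term $k=n$ is exactly $t_n (D^n_1)'$, the quantity appearing in the statement, so the whole content is to show that the remaining terms $1+\sum_{k=2}^{n-1}t_k(D^n_{n+1-k})'$ reorganize into $nu(D^{n-1}_1)/de(D^{n-1}_1)$ once the denominator is cleared.

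Taking the prime of the recursion and using that priming is an involution gives the cleaner form $(D^n_1)'=1+\sum_{j=1}^{n-1}t_j D^n_j$. Into this I substitute Lemma \ref{lem:D^n_i}, which writes each $D^n_j$ ($1\le j\le n-1$) in terms of $D^n_1$ and the lower $D^n_a$; iterating the lemma expresses every $D^n_j$ as $A_j+B_jD^n_1$ with $A_j,B_j$ polynomials that do not involve $t_n$. Collecting, the primed recursion collapses to a single linear relation $(D^n_1)'=\alpha_n+\beta_n D^n_1$, where $\alpha_n=1+\sum_j t_jA_j$ and $\beta_n=\sum_j t_jB_j$ are free of $t_n$. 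Priming this relation (and using that $\alpha_n,\beta_n$, being $t_n$-free, become $t_1$-free) yields the companion $D^n_1=\alpha_n'+\beta_n'(D^n_1)'$, and solving the resulting $2\times2$ linear system gives a closed expression $D^n_1=(\alpha_n'+\beta_n'\alpha_n)/(1-\beta_n\beta_n')$.

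It then remains to match this with the claimed form, which I would do by induction on $n$. Setting $t_n=0$ kills every word that uses the letter $n$, so $D^n_1|_{t_n=0}=D^{n-1}_1$, and since $\alpha_n,\beta_n$ carry no $t_n$ this specialization ties $\alpha_n,\beta_n$ (and their primes) to the numerator $nu(D^{n-1}_1)$ and denominator $de(D^{n-1}_1)$ supplied by the inductive hypothesis. Substituting these identifications into $D^n_1=(\alpha_n'+\beta_n'\alpha_n)/(1-\beta_n\beta_n')$ and simplifying should return exactly $D^n_1=(nu(D^{n-1}_1)+t_n(D^n_1)')/de(D^{n-1}_1)$.

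The hard part is this last matching, where two difficulties stack up. First, the prime operation depends on $n$ (it reverses all $n$ variables at once), so it neither commutes with the $t_n=0$ specialization nor descends directly to the $(n-1)$-plate prime; keeping the variable relabelings straight is the main bookkeeping hazard. Second, the raw solution $(\alpha_n'+\beta_n'\alpha_n)/(1-\beta_n\beta_n')$ is not yet in the asserted form, since numerator and denominator share a spurious common factor that must be cancelled. For $n=3$ this cancellation is the identity $1-\beta_3\beta_3'=(1-t_2^2)\,\big(1-t_1t_2-t_2t_3-t_1t_3+t_1t_2^2t_3\big)$, with the extra factor $1-t_2^2=\alpha_3$ dividing the numerator as well; isolating and proving the analogous factorization for general $n$ is where the real work lies.
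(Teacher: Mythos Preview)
Your first-reflection decomposition and the resulting linear relation $(D^n_1)'=\alpha_n+\beta_n D^n_1$ are correct, but the proof is left genuinely incomplete at the step you yourself flag: the factorization of $1-\beta_n\beta_n'$ that would peel off $de(D^{n-1}_1)$ from the raw solution is checked only for $n=3$ and not established in general. That cancellation is not routine bookkeeping; proving it for all $n$ is essentially equivalent in strength to the theorem, so as it stands the reduction has shifted the difficulty rather than removed it.

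The paper sidesteps this with a different decomposition. Instead of conditioning on the plate of the \emph{first} reflection, it conditions on whether the path ever reaches plate $n$ at all. If not, the generating function is $D^{n-1}_1$. If so, cut at the \emph{first} reflection on plate $n$: the prefix is a path among plates $1,\dots,n-1$ with an even number of reflections (it must be descending just before hitting plate $n$), and the suffix, after reversal of the plate order, is a path starting at plate $1$. This yields directly
\[
D^n_1 \;=\; D^{n-1}_1 \;+\; e(D^{n-1}_1)\,t_n\,(D^n_1)',
\]
where $e(\cdot)$ retains the terms of even total degree. After multiplying by $de(D^{n-1}_1)$, the statement reduces to the single identity $e(D^{n-1}_1)\cdot de(D^{n-1}_1)=1$. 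That follows from a parity observation (established inductively, for instance via Lemma~\ref{lem:D^n_i}): every monomial of $de(D^{n-1}_1)$ has even degree and every monomial of $nu(D^{n-1}_1)$ except the constant $1$ has odd degree, so the even part of $nu(D^{n-1}_1)=D^{n-1}_1\cdot de(D^{n-1}_1)$ is exactly $e(D^{n-1}_1)\cdot de(D^{n-1}_1)$ and equals $1$. In short, your route computes the full rational function and then tries to recognize its numerator and denominator after the fact; the paper's route never leaves the recursive form, so the spurious common factor you are worried about never appears.
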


\begin{proof}
Iterating lemma \ref{lem:D^n_i}, for $n\ge 2$, we can see that each term of $nu(D^n_1)$ except for $1$ has odd degree, and each term of $de(D^n_1)$ has even degree.

Now we classify paths in $D^n_1$ into two classes: paths containing a reflection which occurs at the $n^{\rm{th}}$ plate, and paths whose reflections do not occur at the $n^{\rm{th}}$ plate. A path in the first class must be a path with even reflections in $D^{n-1}_1$, connecting with a bottom up path starting at the $n^{\rm{th}}$ plate. So the generating function of this class is
\[e(D^{n-1}_1)t_n D^{n\prime}_1\]
where $e(D^{n-1}_1)$ is the even degree terms of $D^{n-1}_1$. The generating function of the second class is $D^{n-1}_1$. Then we get
\begin{equation}\nonumber
    \begin{aligned}
    D^{n}_1\times de(D^{n-1}_1)
    &=[D^{n-1}_1 +e(D^{n-1}_1)t_n D^{n\prime}_1]\times de(D^{n-1}_1)\\
    &=D^{n-1}_1\times de(D^{n-1}_1)+e(D^{n-1}_1)t_n D^{n\prime}_1\times de(D^{n-1}_1)\\
    &=nu(D^{n-1}_1)+t_n D^{n\prime}_1 [e(D^{n-1}_1)\times de(D^{n-1}_1)].
    \end{aligned}
    \end{equation}
Since $D^{n-1}_1\times de(D^{n-1}_1)=nu(D^{n-1}_1)$ and each term of $de(D^{n-1}_1)$ has even degree, we know that the sum of even degree terms in $nu(D^{n-1}_1)$ is $e(D^{n-1}_1)\times de(D^{n-1}_1)$. The only term of $nu(D^{n-1}_1)$ of even degree is $1$, and then we have $e(D^{n-1}_1)\times de(D^{n-1}_1)=1$.
\end{proof}

\begin{example}\label{exa:D^3_1}
The above theorem gives a recursion to compute $D^{n}_1$. For example, by \ref{lem:D^2_1}, we know $D^2_1=(1+t_2)/(1-t_1t_2)$, then applying theorem \ref{thm:D^n_1},
\begin{equation}\label{D^3_1}
 D^3_1=\frac{1+t_2+t_3 D^{3\prime}_1}{1-t_1 t_2} 
\end{equation}
and taking prime on both sides,
\begin{equation}\label{D^3_1'}
    D^{3\prime}_1=\frac{1+t_2+t_1 D^{3}_1}{1-t_3 t_2}.
\end{equation}
From (\ref{D^3_1}) and (\ref{D^3_1'}), we have
\[D^3_1=\frac{1+t_2+t_3 [\frac{1+t_2+t_1 D^{3}_1}{1-t_3 t_2}]}{1-t_1 t_2}.\]
Hence
\[D^3_1=\frac{1+t_2+t_3-t_2^2 t_3}{1-t_1 t_2 -t_2 t_3 -t_3 t_1 +t_1 t_2^2 t_3}.\]
\end{example}

\begin{theorem}
For $v\in\mathbb{Z}^n$, we define $m^n_v=N(v)$. Let $v\in\mathbb{Z}^n$ with $v_1 v_n\neq 0$, $n\ge 2$. If $v\cdot\vec{1}$ is odd, then
    \[m^n_v=m^n_{v-e_1 -e_2}+m^n_{v-e_n}+\sum_{\substack{u\in\mathbb{Z}^{n-2}\\ u\cdot\vec{1}\  \rm{is}\  \rm{odd}}}m^{n-2}_u m^n_{v-\overline{u}-e_1}+\sum_{\substack{0\neq u\in\mathbb{Z}^{n-2}\\ u\cdot\vec{1}\  \rm{is}\  \rm{even}}}m^{n-2}_u m^n_{v-\overline{u}-e_n}\]
and, if $v\cdot\vec{1}$ is even, then
    \[m^n_v=m^n_{v-e_n -e_{n-1}}+m^n_{v-e_1}+\sum_{\substack{u\in\mathbb{Z}^{n-2}\\ u\cdot\vec{1}\  \rm{is}\  \rm{odd}}}m^{n-2}_u m^n_{v-r(\overline{u})-e_n}+\sum_{\substack{0\neq u\in\mathbb{Z}^{n-2}\\ u\cdot\vec{1}\  \rm{is}\  \rm{even}}}m^{n-2}_u m^n_{v-r(\overline{u})-e_1}\]
where $\overline{u}=(0,u,0)$, $\{e_i\}$ is the standard basis, and $r(u)$ is the reverse of $u$.
\end{theorem}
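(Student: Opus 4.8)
The plan is to prove both identities combinatorially through the bijection $m^n_v=N(v)$ between paths and down-up words $w=w_1w_2\cdots w_m$ over $\{1,\dots,n\}$ with $w_1>w_2<w_3>\cdots$ and content $v$ (letter $i$ occurring $v_i$ times). First I would record two structural facts: the smallest letter $1$ can only occupy a valley (an even position) and the largest letter $n$ only a peak (an odd position), so the hypothesis $v_1v_n\neq 0$ guarantees that every admissible $w$ contains at least one valley equal to $1$ and at least one peak equal to $n$. I would then set up two content-tracking involutions on words: plain reversal $w\mapsto w_m\cdots w_1$, which preserves the content vector and sends a down-up word to a down-up word when $m$ is odd but to an up-down word when $m$ is even; and the plate reflection $w_j\mapsto n+1-w_j$, which sends the content vector $v$ to $r(v)$ and interchanges down-up with up-down. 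The one delicate convention to keep in view is that a lone reflection at the innermost plate does not ``start at plate $1$'', so that $m^{n-2}_{e_1}=0$ even though the one-letter middle word exists.

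For the odd case I would peel $w$ from the front at its first extreme reflection, i.e.\ at the first position $k$ with $w_k\in\{1,n\}$; the prefix $w_1\cdots w_{k-1}$ then lies entirely in the middle plates $\{2,\dots,n-1\}$ and is a down-up middle word of some content $u$. If $w_k=1$ (so $k$ is even and $u\cdot\vec{1}$ is odd), the free coupling $w_{k-1}>w_k=1$ lets the suffix $w_{k+1}\cdots w_m$ range over all down-up words of content $v-\overline{u}-e_1$ independently of the prefix, producing $\sum_{u\cdot\vec{1}\ \mathrm{odd}}m^{n-2}_u\,m^n_{v-\overline{u}-e_1}$; the single prefix not registered by $m^{n-2}$ (the lone reflection at plate $2$, $u=e_1$) is exactly what the isolated term $m^n_{v-e_1-e_2}$ restores. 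If instead $w_k=n$ (so $k$ is odd and $u\cdot\vec{1}$ is even), the suffix is an up-down word of even length, and plain reversal, being content preserving, identifies its count with $m^n_{v-\overline{u}-e_n}$, the case $u=0$ giving the isolated $m^n_{v-e_n}$. These two branches partition all admissible words, so their sum is the first formula.

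For the even case $w$ ends at a valley, and I would peel symmetrically from the back at the last extreme reflection. The valley-$1$ branch then yields a down-up middle suffix of even content $u$ and the isolated term $m^n_{v-e_1}$; reindexing $u\mapsto r(u)$ by the even-length symmetry $m^{n-2}_u=m^{n-2}_{r(u)}$ rewrites the sum in the stated reversed form $\sum m^{n-2}_u\,m^n_{v-r(\overline{u})-e_1}$. The peak-$n$ branch yields an up-down middle suffix of odd content; since reversing an odd-length up-down word would violate the starting convention, I would instead convert it by the plate reflection, which replaces $m^{n-2}_u$ by $m^{n-2}_{r(u)}$ and forces the genuine reversal $r(\overline{u})$ in $m^n_{v-r(\overline{u})-e_n}$, with the degenerate lone reflection at plate $n-1$ contributing the isolated $m^n_{v-e_n-e_{n-1}}$.

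The main obstacle I anticipate is precisely this bookkeeping. The parity of the peeled middle excursion is rigidly tied to whether the boundary reflection is a valley $1$ (odd excursion) or a peak $n$ (even excursion), and whether one may use plain reversal (content preserving, no $r$) or must use the plate reflection (content reversing, producing $r$) depends on the parity of that excursion's length together with the ``starts at plate $1$'' convention. Pinning down these matchings — and checking that the lowest, degenerate excursions are exactly the pieces that $m^{n-2}$ fails to count, which is what isolates the four plain terms $m^n_{v-e_1-e_2},m^n_{v-e_n}$ and $m^n_{v-e_1},m^n_{v-e_n-e_{n-1}}$ — is where the real care lies. The supporting identity $m^{n-2}_u=m^{n-2}_{r(u)}$ for $u\cdot\vec{1}$ even, which reconciles the non-reversed derivation with the reversed statement, I would establish beforehand from the reversal-composed-with-plate-reflection bijection on even-length words.
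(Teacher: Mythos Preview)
Your proposal is correct and follows essentially the same approach as the paper: both of you decompose a path at its first extreme reflection --- the first position where it hits plate $1$ or plate $n$ --- obtaining a middle excursion in plates $\{2,\dots,n-1\}$ followed by a remainder, and then use reversal or plate-reflection bijections to rewrite the up-down pieces as down-up ones. The paper's four classes are exactly your two branches (first extreme is $1$ vs.\ $n$) together with the two degenerate shortest excursions. For the even case the paper says only that ``the proof is similar''; your choice to peel from the back at the last extreme is the natural mirror of the odd-case argument and does yield the stated formula after the reindexing $u\mapsto r(u)$.

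You are in fact more explicit than the paper about the delicate bookkeeping: you correctly isolate the degenerate one-letter middle excursions (at plate $2$ in the odd case, at plate $n-1$ in the even case) that the starting convention forces $m^{n-2}_{e_1}=0$ to miss, which is exactly why the isolated terms $m^n_{v-e_1-e_2}$ and $m^n_{v-e_n-e_{n-1}}$ appear; and you record the supporting identity $m^{n-2}_u=m^{n-2}_{r(u)}$ for even $|u|$ needed to reconcile your non-reversed derivation with the $r(\overline{u})$ form in the statement. The paper uses the same identity (stated for $m^n$ rather than $m^{n-2}$) but leaves the even case entirely to the reader.
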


\begin{proof}
Suppose $v\cdot\vec{1}$ is odd. First, we classify all paths within $n$ glass plates into four classes (see figure \ref{fig:recursion of N(v)}): the paths whose first reflection occur at $2^{\rm{nd}}$ plate; the paths whose first reflection occur at $n^{\rm{th}}$ plate; the paths, with last reflection occurring at $1^{\rm{st}}$ plate and other reflections occurring between $2^{\rm{th}}$ and $(n-1)^{\rm{th}}$ plates, connecting with a path starting at $1^{\rm{th}}$ plate, and the paths not belonging to previous three classes. This partition gives us
\begin{equation}\label{eq:recursion of m}
m^n_v=m^n_{v-e_1-e_2}+m^n_{r(v-e_n)}+\sum_{\substack{u\in\mathbb{Z}^{n-2}\\ u\cdot\vec{1}\  \rm{is}\  \rm{odd}}}m^{n-2}_u m^n_{v-\overline{u}-e_1}+\sum_{\substack{0\neq u\in\mathbb{Z}^{n-2}\\ u\cdot\vec{1}\  \rm{is}\  \rm{even}}}m^{n-2}_u m^n_{r(v-\overline{u}-e_n)}.
\end{equation}

Note that if $v'\cdot\vec{1}$ is even, then ``paths corresponding to $v'$'' and ``paths corresponding to $r(v')$'' have one-to-one correspondence, which implies $m^n_{v'}=m^n_{r(v')}$. Therefore, we have $m^n_{r(v-e_n)}=m^n_{v-e_n}$ and $m^n_{r(v-\overline{u}-e_n)}=m^n_{v-\overline{u}-e_n}$, and together with equation (\ref{eq:recursion of m}) the conclusion follows. For $v\cdot\vec{1}$ is even, the proof is similar.
\end{proof}

\begin{figure}[htp]
    \centering
    \includegraphics[width=12cm]{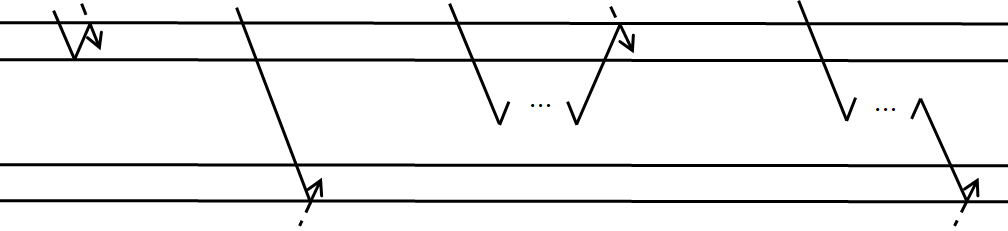}
    \caption{Paths within $n$ glass plates with $v_1 v_n\neq 0$}
    \label{fig:recursion of N(v)}
\end{figure}

\begin{remark}
If $v_1 v_n=0$, say $v_n=0$, then we reduce the problem to case $v\in\mathbb{Z}^{n-1}$.
\end{remark}

\section{A closed form of $N(v)$ within $3$ glass plates}
From now on, we focus on the case $n=3$. Recalling example \ref{exa:D^3_1}, we have
\[D^3_1=\frac{1+t_2+t_3-t_2^2 t_3}{1-t_1 t_2 -t_2 t_3 -t_3 t_1 +t_1 t_2^2 t_3}.\]
Note that the coefficient of the term $t_1^x t_2^y t_3^z$ in $D^3_1$ is $N(x,y,z)$. To find a closed form, we compute the Taylor series of $D^3_1$, and put the coefficient of the term $t_1^n t_2^i t_3^j$ into matrices.
\begin{definition}
We define the matrix $M_n$ as the following: the $(i,j)$ entry of $M_n$ is $m^n_{i,j}=N(n,i,j)$, and we require that the indices $i$ and $j$ start from $0$. Notice that we do not determine the size of $M_n$ at this moment.
\end{definition}

\begin{remark}
It is easy to observe that matrix $M_n$ has $2n+3$ nonzero sequences which are parallel to the main diagonal (See figure 4 to figure 7).
\end{remark}

\begin{figure}[htp]
\begin{center}
\begin{minipage}[t]{0.48\textwidth}
\centering
\includegraphics[width=4cm]{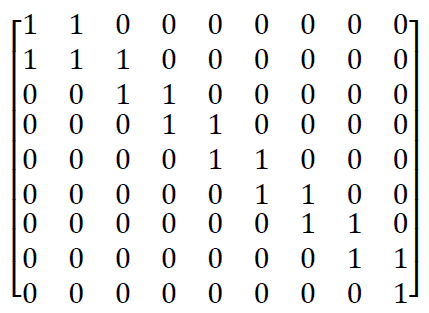}
\caption{Matrix $M_0$}
\end{minipage}
\begin{minipage}[t]{0.48\textwidth}
\centering
\includegraphics[width=4cm]{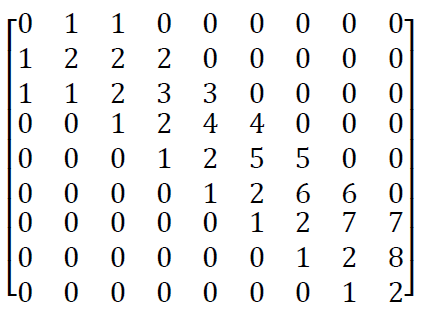}
\caption{Matrix $M_1$}
\end{minipage}
\end{center}
\end{figure}

\begin{figure}[htp]
\begin{center}
\begin{minipage}[t]{0.48\textwidth}
\centering
\includegraphics[width=6cm]{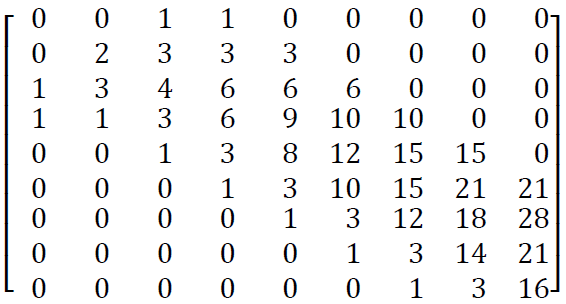}
\caption{Matrix $M_2$}
\end{minipage}
\begin{minipage}[t]{0.48\textwidth}
\centering
\includegraphics[width=6cm]{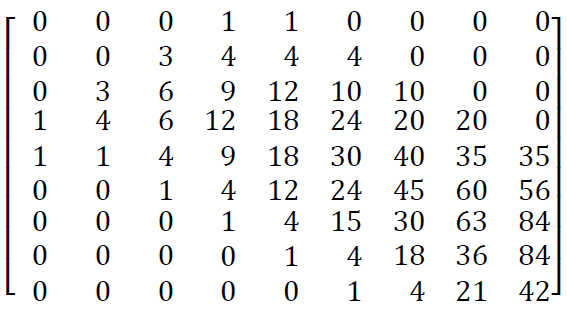}
\caption{Matrix $M_3$}
\end{minipage}
\end{center}
\end{figure}

\begin{definition}
In matrix $M_n$, \emph{the} $i^{\rm{th}}$ \emph{sequence} $S^n_i$ is the $i^{\rm{th}}$ nonzero sequence from the bottom of $M_n$ which is parallel to the diagonal. Moreover, we require the sequence starting from the first nonzero term. Figure $8$ marks the $1^{\rm{st}}$ to the $9^{\rm{th}}$ sequences of $M_3$.
\end{definition}

\begin{definition}
Given a sequence $S=\{a_n\}_{n\ge 0}$, we call itself the \emph{difference sequence of $S$ of order $0$}, and by recurrence, we define the \emph{difference sequence of $S$ of order $i$} to be $d(S,i)=\{c_{n+1}-c_n\}_{n\ge 0}$ where $\{c_n\}_{n\ge 0}$ is the difference sequence of $S$ of order $i-1$. Moreover, a sequence $\{a_n\}_{n\ge 0}$ is called an \emph{arithmetic sequence of order} $0$ if it is a constant sequence. A sequence $\{a_n\}_{n\ge 0}$ is called an \emph{arithmetic sequence of order} $k$ if the sequence $\{a_{n+1}-a_n\}_{n\ge 0}$ is an arithmetic sequence of order $k-1$. Figure $9$ gives an arithmetic sequence of order $3$. Finally, the leading term of a sequence $S$ is denoted by $LT(S)$.
\end{definition}

\begin{figure}[htp]
\begin{center}
\begin{minipage}[t]{0.48\textwidth}
\centering
\includegraphics[width=6cm]{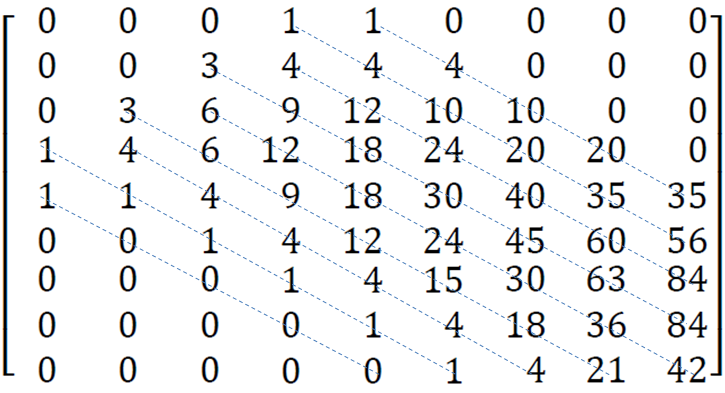}
\caption{The $i^{\rm{th}}$ sequences of $M_n$}
\end{minipage}
\begin{minipage}[t]{0.48\textwidth}
\centering
\includegraphics[width=6cm]{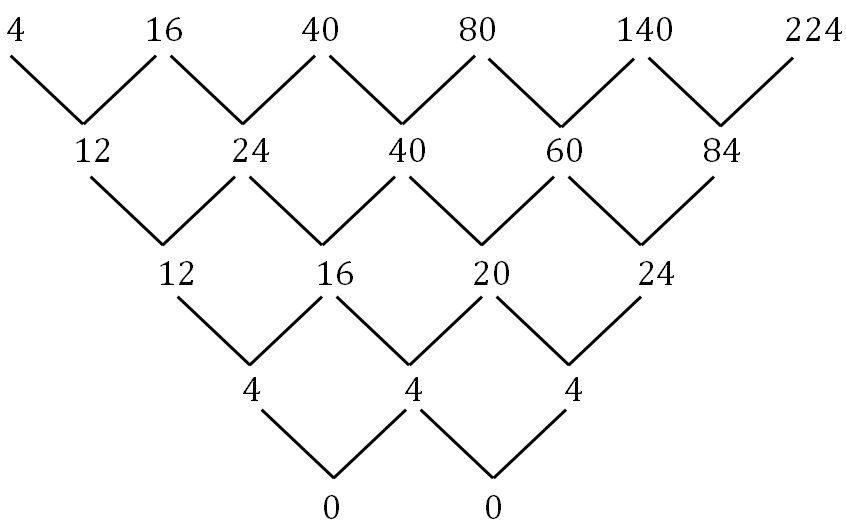}
\caption{Arithmetic sequence of higher order}
\end{minipage}
\end{center}
\end{figure}

Observing figure $4$ to figure $7$, we can see that $LT(S^n_1)=1$ and the other terms of $S^n_1$ are $0$. We also observe that 
\begin{equation*}
    LT(S^n_i)=
    \begin{cases}
    C(n+1,\frac{i-1}{2}) & \textrm{if $i$ is odd}\\
    C(n,\frac{i}{2}-1) & \textrm{if $i$ is even}.
    \end{cases}
\end{equation*}
Finally, for integers $n\ge k\ge 0$, $S^n_{2k+2}$ and $S^n_{2k+3}$ are arithmetic sequences of order $k$. These properties will derive a closed form for any entry of $M_n$. We will prove these observations in up-coming theorems.

\begin{lemma}[Tick Lemma]
\label{lem:tick}
The elements $m^n_{i,j}$ of $M_n$ satisfy the recursion
\[m^n_{i,j}=m^{n-1}_{i-1,j}+\sum_{k\ge 0}m^{n-1}_{i-k,j-1-k}.\]
\end{lemma}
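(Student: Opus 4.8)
The plan is to read the recursion off the closed form of the generating function $D^3_1$ obtained in Example \ref{exa:D^3_1}, and then iterate to reach the stated ``tick'' shape. Write $D^3_1 = P/Q$ with numerator $P = 1 + t_2 + t_3 - t_2^2 t_3$ and denominator $Q = 1 - t_1 t_2 - t_2 t_3 - t_3 t_1 + t_1 t_2^2 t_3$, and recall that $N(x,y,z) = m^x_{y,z}$ is the coefficient of $t_1^x t_2^y t_3^z$ in $D^3_1$. Clearing denominators gives the formal power series identity $D^3_1 \cdot Q = P$.

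First I would extract the coefficient of $t_1^n t_2^i t_3^j$ from both sides. Since $P$ is independent of $t_1$, for every $n \ge 1$ the right-hand side contributes $0$, while the left-hand side contributes one term for each monomial of $Q$. This yields the ``denominator recursion''
\[m^n_{i,j} = m^{n-1}_{i-1,j} + m^n_{i-1,j-1} + m^{n-1}_{i,j-1} - m^{n-1}_{i-2,j-1},\]
valid for all $n \ge 1$, with the convention that $m$ vanishes whenever any index is negative (a path cannot have a negative number of reflections).

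The only feature separating this from the Tick Lemma is the same-level term $m^n_{i-1,j-1}$. I would remove it by telescoping: apply the displayed recursion to $m^n_{i-1,j-1}$, then to $m^n_{i-2,j-2}$, and so on. Because the indices strictly decrease, after finitely many steps the same-level term drops out (its indices become negative), and one obtains
\[m^n_{i,j} = \sum_{k\ge 0}\left[ m^{n-1}_{i-1-k,j-k} + m^{n-1}_{i-k,j-1-k} - m^{n-1}_{i-2-k,j-1-k}\right].\]
It then remains to simplify the three families. The middle family is already $\sum_{k\ge 0} m^{n-1}_{i-k,j-1-k}$, the sum appearing in the statement. The first and third families cancel termwise after a shift of the index: the $k$-th summand $-m^{n-1}_{i-2-k,j-1-k}$ of the third family is exactly the negative of the $(k+1)$-st summand of the first, so their combination collapses to the single surviving term $m^{n-1}_{i-1,j}$ (the $k=0$ summand of the first family). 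Adding the two contributions gives precisely the claimed formula.

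The main obstacle is purely bookkeeping: making the telescoping rigorous by tracking the boundary (ensuring the same-level and negative-index terms truly vanish) and verifying the shift that collapses the first and third families; I expect no genuine difficulty, since everything reduces to finite sums. As an alternative I would mention a direct bijective proof, partitioning the paths counted by $m^n_{i,j}$ according to the block removed at the first plate-$1$ reflection: the summand $m^{n-1}_{i-1,j}$ would correspond to deleting a ``$21$'' descent, and the summand $m^{n-1}_{i-k,j-1-k}$ to deleting a zig-zag block using one reflection at plate $1$, $k$ at plate $2$, and $k+1$ at plate $3$. This matches the vector shifts term by term, but the generating-function route is shorter and relies only on Example \ref{exa:D^3_1}.
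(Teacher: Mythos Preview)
Your argument is correct, but it follows a genuinely different route from the paper's. The paper proves the lemma bijectively: it classifies the paths counted by $N(n,i,j)$ according to whether the first reflection occurs at plate~$2$ or plate~$3$, obtaining $N(n,i,j)=N(n-1,i-1,j)+N'(n,i,j-1)$ with $N'$ counting bottom-up paths from plate~$3$, and then iterates the same dichotomy on $N'$ to produce the sum $\sum_{k\ge 0}m^{n-1}_{i-k,j-1-k}$. Your approach is instead purely algebraic: you extract a five-term linear recursion from the denominator of the closed form $D^3_1=P/Q$ in Example~\ref{exa:D^3_1} and then telescope away the same-level term $m^n_{i-1,j-1}$, after which the two ``diagonal'' families cancel by an index shift. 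Both are short; the paper's version is self-contained and explains the name (the contributing entries form a tick shape in $M_{n-1}$), while yours is mechanical and leans on the explicit rational expression already established, which makes it a nice illustration of how the denominator of $D^3_1$ encodes the recursion. Your closing bijective sketch is close in spirit to the paper's decomposition, though phrased around the first plate-$1$ reflection rather than the first reflection overall.
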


\begin{proof}
We classify all paths into two classes: paths with first reflection occurring at $2^{\rm{nd}}$ plate, and paths with first reflection occurring at $3^{\rm{rd}}$ plate. Hence we have
\[N(n,i,j)=N(n-1,i-1,j)+N'(n,i,j-1)\]
where $N'(c_1,c_2,c_3)$ is the number of bottom up paths which starting at the $3^{\textit{rd}}$ plate, and the $i^{\text{th}}$ plate has exactly $c_i$ reflections. Inductively, we get
\begin{equation}\nonumber
\begin{aligned}
N(n,i,j)&=N(n-1,i-1,j)+N'(n,i,j-1)\\
&=N(n-1,i-1,j)+N(n-1,i,j-1)+N'(n,i-1,j-2)\\
&=N(n-1,i-1,j)+N(n-1,i,j-1)+N(n-1,i-1,j-2)+N'(n,i-2,j-3)\\
&\hspace{3.25cm}\vdots\\
&=N(n-1,i-1,j)+\sum_{k\ge 0}N(n-1,i-k,j-1-k).
\end{aligned}
\end{equation}
\end{proof}

\begin{remark}
 By lemma \ref{lem:tick}, the $(3,4)$ entry in figure $11$ satisfies 
\[m^3_{3,4}=m^2_{2,4}+m^2_{3,3}+m^2_{2,2}+m^2_{1,1}+m^2_{0,0}.\]
which is the sum of some elements forming a tick in figure $10$. In general, $m^n_{i,j}$ is equal to the sum of a tick in $M_{n-1}$.    
\end{remark}

\begin{figure}[htp]
\begin{center}
\begin{minipage}[t]{0.48\textwidth}
\centering
\includegraphics[width=6cm]{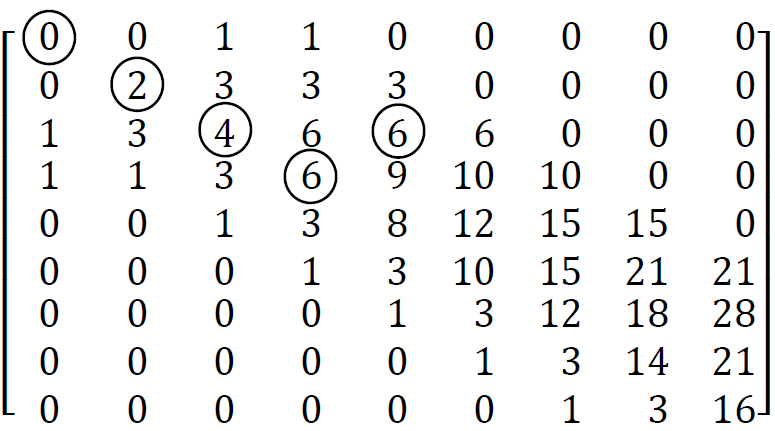}
\caption{Matrix $M_2$}
\end{minipage}
\begin{minipage}[t]{0.48\textwidth}
\centering
\includegraphics[width=6cm]{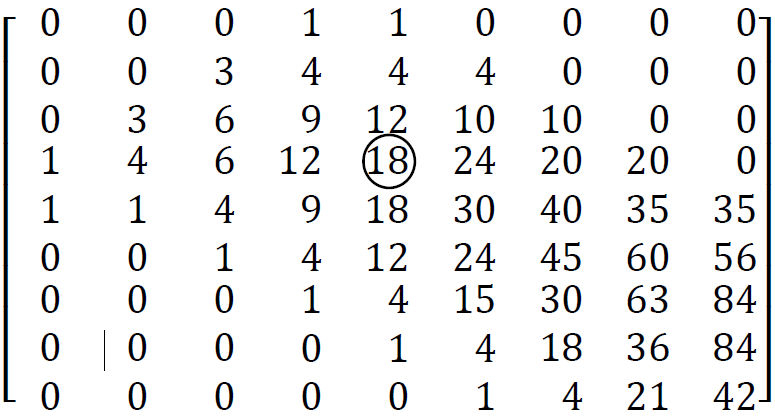}
\caption{Matrix $M_3$}
\end{minipage}
\end{center}
\end{figure}

\begin{theorem}
For nonnegative integer $n\ge 0$, $i\in\{1,2,\cdots,2n+3\}$, $S^n_i$ are the only $2n+3$ nonzero sequences parallel to diagonal in $M_n$. Moreover, each term of $S^n_1$ is $0$ except $LT(S^n_1)=1$, and for any $n\ge k\ge 0$, $S^n_{2k+2}$ and $S^n_{2k+3}$ are arithmetic sequences of order $k$.
\end{theorem}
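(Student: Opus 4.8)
The plan is to reindex the diagonals and run an induction on $n$ whose engine is the Tick Lemma. For $-(n+1)\le d\le n+1$ let $A^n_d$ denote the sequence read along the diagonal $\{(i,j):j-i=d\}$ of $M_n$, started at its first nonzero entry; under the labelling from the bottom one has $S^n_{d+n+2}=A^n_d$, so that $S^n_1,\dots,S^n_{2n+3}$ are exactly $A^n_{-(n+1)},\dots,A^n_{n+1}$. Substituting $j=i+d$ into Lemma \ref{lem:tick} rewrites the recursion in the diagonal form
\[A^n_d(i)=A^{n-1}_{d+1}(i-1)+\sum_{r\le i}A^{n-1}_{d-1}(r),\]
that is, the $d$-diagonal of $M_n$ is a row-shift of the $(d+1)$-diagonal of $M_{n-1}$ added to the cumulative (prefix) sum of the $(d-1)$-diagonal of $M_{n-1}$. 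In particular a diagonal of $M_n$ can be nonzero only when $d+1$ or $d-1$ lies in the support $\{-n,\dots,n\}$ of $M_{n-1}$, i.e. only for $-(n+1)\le d\le n+1$; this already caps the number of nonzero diagonals at $2n+3$.

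I would carry the induction with an enriched hypothesis recording, for each $n$ and each $-n\le d\le n+1$: that $A^n_d$ is \emph{clean}, meaning equal to a single polynomial of degree $\lfloor(n+d)/2\rfloor$ with positive leading coefficient from its first nonzero entry onward; that this first nonzero entry sits in row $\rho_n(d)=\lceil(n-d)/2\rceil$; and separately that the bottom diagonal $A^n_{-(n+1)}$ is zero except for a leading $1$. Three elementary facts about sequences would be isolated first: (i) a row-shift preserves the order and moves the starting row by the shift amount; (ii) the cumulative sum of a clean order-$k$ sequence is a clean order-$(k+1)$ sequence with the same starting row $\rho$, because $\sum_{r=\rho}^{i}P(r)$ is a polynomial in $i$ of degree $k+1$ agreeing with the sequence for all $i\ge\rho$, with positive leading coefficient; and (iii) the termwise sum of two clean sequences of equal degree, equal starting row, and positive leading coefficients is again clean of that degree, the leading coefficients and first values adding without cancellation by positivity.

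The inductive step then reads the three pieces off the recursion. For the interior diagonals $-n+1\le d\le n-1$ both summands are present, the shift has degree $\lfloor((n-1)+(d+1))/2\rfloor=\lfloor(n+d)/2\rfloor$ and the cumulative sum has degree $\lfloor((n-1)+(d-1))/2\rfloor+1=\lfloor(n+d)/2\rfloor$, and --- this is the key computation --- both start in the same row, since $\rho_{n-1}(d+1)+1=\lceil(n-d)/2\rceil=\rho_{n-1}(d-1)=\rho_n(d)$; fact (iii) then gives cleanness of order $\lfloor(n+d)/2\rfloor$. The four boundary diagonals are single-summand and handled by (i),(ii): $A^n_{n}$ and $A^n_{n+1}$ are cumulative sums alone, $A^n_{-n}$ is a shift of the clean order-$0$ diagonal $A^{n-1}_{-n+1}$, and $A^n_{-(n+1)}$ is a shift of the special diagonal $A^{n-1}_{-n}$, hence again zero except for a leading $1$. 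Translating $d=2k-n$ and $d=2k+1-n$ back through $S^n_{d+n+2}$ gives order $\lfloor(n+d)/2\rfloor=k$ for $S^n_{2k+2}$ and $S^n_{2k+3}$, while $d=-(n+1)$ gives the asserted $S^n_1$; positivity of the leading coefficients guarantees that none of these diagonals degenerate, so the support is exactly these $2n+3$ sequences. The base case $n=0$ is read directly from $D^3_1|_{t_1=0}=(1+t_2+t_3-t_2^2t_3)/(1-t_2t_3)$ (Figure 4): diagonals $d=0,1$ are constant $1$ and $d=-1$ is $1$ followed by zeros.

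I expect the only real obstacle to be point (iii) together with the start-row bookkeeping: the statement asks for \emph{arithmetic} behaviour from the very first nonzero term, not merely eventually, so one must rule out a transient region where one summand has already ``switched on'' while the other has not. This is exactly what the identity $\rho_{n-1}(d+1)+1=\rho_{n-1}(d-1)$ prevents, and verifying that $\rho_n(d)=\lceil(n-d)/2\rceil$ propagates correctly through both the shift and the cumulative sum --- and that the cumulative-sum polynomial glues to the preceding zeros without a jump, its value at row $\rho-1$ being the empty sum $0$ --- is the part I would write out most carefully. Everything else is the formal algebra of finite differences, where summation raises the order by one and positivity forbids cancellation of leading terms.
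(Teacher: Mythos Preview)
Your approach is essentially the paper's own --- induction on $n$ driven by the Tick Lemma, reading each diagonal of $M_n$ as a shift of one diagonal of $M_{n-1}$ plus a prefix sum of another --- but carried out with considerably more care about starting rows, positivity, and the boundary diagonals than the paper itself supplies. One small point to tidy: your ``interior'' range $-n+1\le d\le n-1$ includes $d=-n+1$, where the cumulative sum is of the special diagonal $A^{n-1}_{-n}=(1,0,0,\ldots)$ rather than a clean sequence, so either note that fact~(ii) still applies to this degenerate input (its prefix sum is the clean order-$0$ sequence $(1,1,1,\ldots)$, starting at the correct row $n$) or handle $d=-n+1$ as a fifth boundary case.
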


\begin{proof}
We give a proof by induction on $n$. It is trivial to verify the case $n=0,1$. For $n\ge 2$, for any $n-1\ge k\ge 1$, $S^n_{2k+2}=\{m^n_{(n-1-k+i),(k+i)}\}_{i\ge 0}$, then lemma \ref{lem:tick} states that
\[m^n_{(n-1-k+i),(k+i)}=m^{n-1}_{(n-2+k+i),(k+i)}+\sum_{j\ge 0}m^{n-1}_{(n-1-k+i-j),(k+i-1-j)}.\]
By induction hypothesis, for any $i\ge 0$,  $\{m^{n-1}_{(n-1-k+i-j),(k+i-1-j)}\}_{j\ge 0}$ is an arithmetic sequence of order $k-1$, so $\{\sum_{j\ge 0}m^{n-1}_{(n-1-k+i-j),(k+i-1-j)}\}_{i\ge 0}$ is an arithmetic sequence of order $k$. Applying hypothesis again, $\{m^{n-1}_{(n-2+k+i),(k+i)}\}_{i\ge 0}$ is an arithmetic sequence of order $k$. Hence we know $S^n_{2k+2}$ is an arithmetic sequence of order $k$. A similar argument works for other cases.  
\end{proof}

\begin{theorem}
\label{thm:anti-diagonal}
For integers $n\ge0$, $2n+3\ge i\ge 1$, there holds
\begin{equation*}
    LT(S^n_i)=
    \begin{cases}
    C(n+1,\frac{i-1}{2}) & \textrm{if $i$ is odd}\\
    C(n,\frac{i}{2}-1) & \textrm{if $i$ is even}.
    \end{cases}
\end{equation*}
\end{theorem}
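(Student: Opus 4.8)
The plan is to argue by induction on $n$, using the Tick Lemma (Lemma \ref{lem:tick}) to show that the leading terms of the sequences in $M_n$ obey Pascal's recursion. The base cases $n=0,1$ are read off directly from the small matrices $M_0$ and $M_1$. For the inductive step I would first record the exact location of each leading term. Ordering the diagonals by increasing value of $j-i$ (this is the order ``from the bottom''), the leading term of $S^n_1$ sits at position $(n+1,0)$, the leading term of the even sequence $S^n_{2k+2}$ sits at $(n-k,k)$, and the leading term of the odd sequence $S^n_{2k+3}$ sits at $(n-k,k+1)$, for $0\le k\le n$. That these are genuinely the first nonzero entries of their diagonals is part of the ``$2n+3$ nonzero sequences'' picture established in the previous theorem, and can also be checked directly from the support of $N(n,\cdot,\cdot)$.

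The heart of the argument is to evaluate $m^n_{i,j}$ at each of these corner positions via the Tick Lemma. For instance, at the even corner $(n-k,k)$ the lemma gives
\[m^n_{n-k,k}=m^{n-1}_{n-k-1,k}+\sum_{t\ge 0}m^{n-1}_{n-k-t,k-1-t}.\]
The key observation is that the entire tick-sum telescopes to a single term: the $t=0$ summand $m^{n-1}_{n-k,k-1}$ is exactly the leading term of $S^{n-1}_{2k}$, while every summand with $t\ge 1$ lies strictly before a leading term along the same diagonal of $M_{n-1}$ and is therefore $0$. Identifying the two survivors $m^{n-1}_{n-k-1,k}$ and $m^{n-1}_{n-k,k-1}$ as the leading terms of $S^{n-1}_{2k+2}$ and $S^{n-1}_{2k}$ and invoking the induction hypothesis turns the identity into $C(n-1,k)+C(n-1,k-1)=C(n,k)$. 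The odd corners $(n-k,k+1)$ are handled the same way and collapse to $C(n,k+1)+C(n,k)=C(n+1,k+1)$, while $S^n_1$ is the degenerate case in which only one term survives and yields $C(n+1,0)=1$. Reindexing by $i=2k+2$ and $i=2k+3$ then matches the two cases in the statement, and the boundary values $k=0$ and $k=n$ are absorbed automatically by the vanishing binomials in Pascal's identity.

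The main obstacle is the bookkeeping in the two steps above: correctly pinning down the leading-term positions and, with them, verifying that all but one or two of the tick summands vanish because they sit before a leading term. This is where an off-by-one error is easiest to make, since it hinges on the precise upper-left boundary of the support of $M_{n-1}$; once the positions are fixed, the collapse of the sum and the appearance of Pascal's identity are routine. To keep the induction self-contained I would carry the leading-term positions as part of the inductive statement, proving location and value simultaneously, so that the vanishing of the $t\ge 1$ terms is licensed directly by the inductive hypothesis rather than by a separate support computation.
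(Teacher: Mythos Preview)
Your inductive argument via the Tick Lemma is correct. Once the leading-term positions are pinned down as you describe, the tick sum does collapse to two surviving terms (or one, at the boundary), and Pascal's identity finishes the job. Carrying the positions and the vanishing of the pre-leading entries through the induction, as you propose in the last paragraph, is exactly what is needed to make the ``$t\ge 1$ terms vanish'' step rigorous; you should also note that for $S^n_1$ the tick reaches a diagonal \emph{outside} the $2(n-1)+3$ nonzero ones of $M_{n-1}$, which you are implicitly using via the previous theorem.

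The paper, however, takes a completely different and much shorter route: it gives a direct bijective count, with no induction and no Tick Lemma. The observation is that the leading terms of the even sequences lie on the anti-diagonal $a+b=n$ and those of the odd sequences on $a+b=n+1$. For a vector $(n,a,b)$ with $a+b=n+1$ the corresponding alternating words have length $2n+1$; since ``$1$'' can only occupy the $n$ even positions and there are exactly $n$ ones, every even position is forced to be $1$, and the $n+1$ odd positions are filled freely with $a$ twos and $b$ threes. This gives $m^n_{a,b}=C(n+1,a)$ immediately, and the case $a+b=n$ is identical with $C(n,a)$. So the paper reads the binomial coefficient off a single combinatorial picture (a shuffle of two kinds of v-shaped pieces), whereas your approach reconstructs it one Pascal step at a time. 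Your method has the virtue of being purely recursive and of fitting the same machinery used elsewhere in the section; the paper's method is more conceptual and explains \emph{why} the answer is a binomial coefficient rather than merely verifying it.
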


\begin{proof}
In matrix $M_n$, since $m^n_{i,j}$ is the number of paths corresponding to $(n,i,j)$, the entries $m^n_{a,b}$ in the first nonzero sequence parallel to the anti-diagonal must satisfy $a+b=n$, and the entries $m^n_{c,d}$ in the second sequence must satisfy $c+d=n+1$. Therefore, whenever $c+d=n+1$, $m^n_{c,d}=C(n+1,c)$ because it is the number of permutation of $c$ v-shaped paths whose reflection occurs at second plate and $d$ v-shaped paths whose reflection occurs at third plate. Figure $12$ shows all paths corresponding to $(3,2,2)$. Similarly, we have $m^n_{a,b}=C(n,a)$ for $a+b=n$.
\end{proof}

\begin{figure}[htp]
    \centering
    \includegraphics[width=8.7cm]{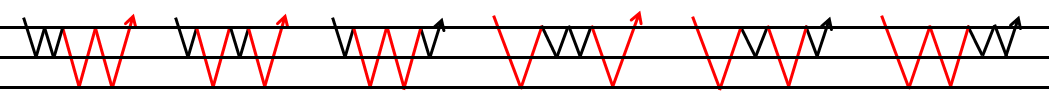}
    \caption{Paths corresponding to $(3,2,2)$}
\end{figure}

\newpage
To obtain a closed form of $m^n_{i,j}$, we need one more property about matrices $M_n$: figure $13$ shows $d(S^4_9,j)$ for $0\le j\le 3$. Note that the ratio of $LT(d(S^4_9,j))$ is $1:3:3:1$. Similarly, we have the ratio of $LT(d(S^5_{11},j))$ is $1:4:6:4:1$ (see figure $14$). In general, in matrix $M_n$, for any $i\ge 2$, ratio of the leading terms of difference sequences of $S^n_i$ is the binomial coefficients.

\begin{figure}[htp]
\begin{center}
\begin{minipage}[t]{0.48\textwidth}
\centering
\includegraphics[width=3.9cm]{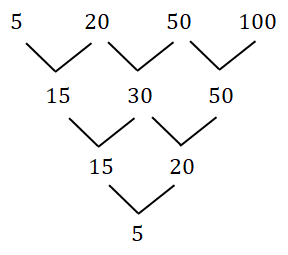}
\caption{The difference sequences of $S^4_9$}
\end{minipage}
\begin{minipage}[t]{0.48\textwidth}
\centering
\includegraphics[width=4.8cm]{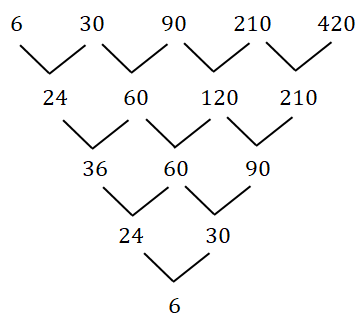}
\caption{The difference sequences of $S^5_{11}$}
\end{minipage}
\end{center}
\end{figure}

\begin{lemma}
\label{lem:LT}
In matrix $M_n$, the leading term of the difference sequence of the $i^{\rm{th}}$ sequence of order $j$ satisfies the recursion: 
\[LT(d(S^n_i,j))=LT(d(S^{n-1}_i,j))+LT(d(S^{n-1}_{i-2},j))+LT(d(S^{n-1}_{i-2},j-1)).\]
\end{lemma}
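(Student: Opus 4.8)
The plan is to feed the Tick Lemma (\ref{lem:tick}) into the definition of $S^n_i$ and read off how the leading term of the order-$j$ difference decomposes. Write $\Delta$ for the forward difference operator, so that $d(S,j)=\Delta^j S$ and $LT(d(S,j))=(\Delta^j S)_0=\sum_{r=0}^{j}(-1)^{j-r}C(j,r)\,S_r$, where $S_r$ is the $r$-th entry of $S$ counted from its leading term. Since the functional $(\Delta^j\,\cdot\,)_0$ is linear, the whole proof reduces to: (i) split the diagonal sequence $S^n_i$ into two explicit sequences via \ref{lem:tick}, and (ii) evaluate $(\Delta^j\,\cdot\,)_0$ on each piece separately. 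Everything then adds up.

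Fix the diagonal of $S^n_i$ and let $(p,q)$ run along it starting from the leading entry $(p_0,q_0)$. Lemma \ref{lem:tick} gives $m^n_{p,q}=m^{n-1}_{p-1,q}+\sum_{k\ge0}m^{n-1}_{p-k,q-1-k}$, so $S^n_i=A+B$, where $A=\{m^{n-1}_{p-1,q}\}$ tracks the diagonal one step above and $B=\{\sum_{k\ge0}m^{n-1}_{p-k,q-1-k}\}$ accumulates entries along the diagonal one step below. In the labelling used here the $i$-th sequence from the bottom lies on the diagonal $p-q=n+2-i$, so stepping up one diagonal fixes the index while dropping $n$ by one, and stepping down one diagonal sends $i\mapsto i-2$; thus $A$ traces $S^{n-1}_i$ and the support of $B$ traces $S^{n-1}_{i-2}$. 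Theorem \ref{thm:anti-diagonal} is exactly what pins down the \emph{alignment}: in $M_{n-1}$ the leading entries of $S^{n-1}_i$ and $S^{n-1}_{i-2}$ sit on the anti-diagonals $p+q=n-1$ (even index) or $p+q=n$ (odd index), and comparing anti-diagonal sums shows that the first entry $(p_0-1,q_0)$ of $A$ is precisely the leading entry of $S^{n-1}_i$. Hence $A=S^{n-1}_i$ verbatim and $LT(d(A,j))=LT(d(S^{n-1}_i,j))$, producing the first summand.

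For the accumulating sequence $B$ the same anti-diagonal count shows that the $k=0$ summand at step $\ell$ sits at position $\ell$ of $v:=S^{n-1}_{i-2}$, so $B$ is literally the partial-sum sequence $B_\ell=\sum_{m=0}^{\ell}v_m$. The elementary identity $\Delta B_\ell=B_{\ell+1}-B_\ell=v_{\ell+1}$ then gives $(\Delta^j B)_0=(\Delta^{j-1}v)_1=(\Delta^{j-1}v)_0+(\Delta^j v)_0$, that is $LT(d(B,j))=LT(d(S^{n-1}_{i-2},j-1))+LT(d(S^{n-1}_{i-2},j))$. Adding the contribution of $A$ yields the asserted recursion. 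The degenerate ranges ($j=0$, read with the convention $d(\,\cdot\,,-1)=0$, and the extreme indices $i\in\{1,2\}$ or $i>2n+1$) are verified directly; in particular $j=0$ reduces to $LT(S^n_i)=LT(S^{n-1}_i)+LT(S^{n-1}_{i-2})$, the Pascal relation already implicit in Theorem \ref{thm:anti-diagonal}.

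I expect the only genuine difficulty to be this alignment bookkeeping. The clean split $(\Delta^{j-1}v)_1=(\Delta^{j-1}v)_0+(\Delta^j v)_0$ depends on $B$ being the partial sum of $v$ starting exactly at its leading term: a single off-by-one shift in where $A$ or $B$ begins would replace this by a different linear combination and wreck the identity. The hard part is therefore to carry out the two anti-diagonal computations carefully enough to certify that $A$ and $B$ begin at the leading entries of $S^{n-1}_i$ and $S^{n-1}_{i-2}$, and this is precisely where Theorem \ref{thm:anti-diagonal} is indispensable. I would state those two anti-diagonal checks as an explicit preliminary, and only then invoke linearity of $(\Delta^j\,\cdot\,)_0$ together with the partial-sum identity to conclude.
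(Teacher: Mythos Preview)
Your approach is essentially the same as the paper's. Both proofs apply the Tick Lemma to split the $\ell$-th term of $S^n_i$ into a single entry of $S^{n-1}_i$ plus a partial sum along $S^{n-1}_{i-2}$, and then evaluate the order-$j$ difference at the leading position. You phrase this via the operator identity $(\Delta^j B)_0=(\Delta^{j-1}v)_1=(\Delta^{j-1}v)_0+(\Delta^j v)_0$; the paper writes out the same computation as explicit binomial sums, obtaining
\[
LT(d(S^n_i,j))=\sum_{k=0}^{j}(-1)^k C(j,k)a_{j-k}+\sum_{k=0}^{j-1}(-1)^k C(j-1,k)b_{j-k}
\]
and then checking that the second sum equals $LT(d(S^{n-1}_{i-2},j))+LT(d(S^{n-1}_{i-2},j-1))$ by Pascal's rule. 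These are two ways of writing the same calculation.

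Two minor remarks. First, your diagonal index should read $q-p=n+2-i$ rather than $p-q=n+2-i$; this is a sign slip that does not affect the argument. Second, you call Theorem~\ref{thm:anti-diagonal} ``indispensable'' for the alignment, but in fact the paper does not invoke it at all: it simply asserts the decomposition ``by Lemma~\ref{lem:tick}'' and leaves the alignment implicit. Your instinct to make the alignment explicit is good and makes the proof cleaner, but the positions of the leading terms can be read off directly from the shape of the nonzero region of $M_{n-1}$ without needing their actual values, so Theorem~\ref{thm:anti-diagonal} is convenient rather than essential.
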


\begin{proof}
Let $\{a_j\}_{j\ge 0}$ and $\{b_j\}_{j\ge 0}$ be the $i^{\rm{th}}$ and the $(i-2)^{\rm{th}}$ sequence of $M_{n-1}$ respectively. By lemma \ref{lem:tick}, we see 
\[LT(d(S^n_i,j))=\sum_{k=0}^{j}(-1)^k C(j,k)a_{j-k}+\sum_{k=0}^{j-1}(-1)^k C(j-1,k)b_{j-k}.\]
Furthermore, it is also trivial to compute
\[LT(d(S^{n-1}_i,j))=\sum_{k=0}^{j}(-1)^k C(j,k)a_{j-k}\]
and
\begin{equation}\nonumber
\begin{aligned}
    LT(d(S^{n-1}_{i-2},j))+LT(d(S^{n-1}_{i-2},j-1))&=\sum_{k=0}^{j}(-1)^k C(j,k)b_{j-k}+\sum_{k=0}^{j-1}(-1)^k C(j-1,k)b_{(j-1)-k}\\
    &=\sum_{k=0}^{j-1}(-1)^k C(j-1,k)b_{j-k}
\end{aligned}
\end{equation}
which complete the proof.
\end{proof}

\newpage
\begin{theorem}
\label{thm:LT}
In matrix $M_n$, for $i\ge 2$, $0\le j\le \lfloor{i/2}\rfloor-1$, there exists a constant $k_{n,i}$ such that
\begin{equation*}
  LT(d(S^n_i,j))=
    \begin{cases}
    k_{n,i}C(\frac{i}{2}-1,j) \textrm{ if $i$ is even}\\
    k_{n,i}C(\frac{i-1}{2}-1,j) \textrm{ if $i$ is odd.}
    \end{cases}  
\end{equation*}
\end{theorem}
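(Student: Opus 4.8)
The plan is to induct on $n$, using the recursion of Lemma \ref{lem:LT} together with Pascal's identity $C(m-1,j)+C(m-1,j-1)=C(m,j)$. Before starting, I would recast the statement in a slightly stronger but cleaner form that removes the awkward upper bound on $j$: writing $m_i=\lfloor i/2\rfloor-1$, I claim that for every $j\ge 0$, with the usual convention that $C(m,j)=0$ whenever $j<0$ or $j>m$,
\[LT(d(S^n_i,j))=k_{n,i}\,C(m_i,j).\]
This is equivalent to the stated theorem: by the preceding arithmetic-order theorem the sequence $S^n_i$ has order exactly $\lfloor i/2\rfloor-1=m_i$, so $d(S^n_i,j)$ is the zero sequence for $j>m_i$, matching $C(m_i,j)=0$; and the range $0\le j\le m_i$ is precisely the range in the statement. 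I would also set $k_{n,i}:=LT(S^n_i)=LT(d(S^n_i,0))$, whose value is supplied by Theorem \ref{thm:anti-diagonal}; since $C(m_i,0)=1$ this forces the $j=0$ case and pins down the constant.

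For the base case I would check $n=0$ (and, if convenient, $n=1$) directly: there the only sequences with $i\ge 2$, namely $S^0_2,S^0_3$, are constant, so $m_i=0$ and both sides reduce to the constant value when $j=0$ and to $0$ when $j\ge 1$. For the inductive step, fix $i$ and $j$ and feed the induction hypothesis for $M_{n-1}$ into the three terms of Lemma \ref{lem:LT}. The crucial elementary observation is that $i$ and $i-2$ have the same parity, so $m_{i-2}=m_i-1$. Hence the two terms carrying $S^{n-1}_{i-2}$ combine as
\[k_{n-1,i-2}\bigl[C(m_i-1,j)+C(m_i-1,j-1)\bigr]=k_{n-1,i-2}\,C(m_i,j)\]
by Pascal, while the first term contributes $k_{n-1,i}\,C(m_i,j)$; adding gives $LT(d(S^n_i,j))=(k_{n-1,i}+k_{n-1,i-2})\,C(m_i,j)$, which is of the required shape with $k_{n,i}=k_{n-1,i}+k_{n-1,i-2}$. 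The even and odd cases are handled simultaneously by this single computation.

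The step I expect to require the most care is the bookkeeping at the two boundaries. First, the topmost sequences $i=2n+2$ and $i=2n+3$ of $M_n$ have no counterpart $S^{n-1}_i$ in $M_{n-1}$; I would handle this by adopting the convention that the leading term of a nonexistent sequence is $0$, so that Lemma \ref{lem:LT} simply drops its first term, after which the same Pascal computation goes through. Second, I need the induction hypothesis applied to $S^{n-1}_{i-2}$ only within its own valid range of orders; this is exactly why recasting the claim for all $j\ge 0$ pays off, since then the vanishing of high-order differences guaranteed by the arithmetic-order theorem is automatically encoded by the identities $C(m_i-1,j)=0$ for $j>m_i-1$ and $C(m_i-1,-1)=0$. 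As a consistency check, one can note that the resulting recursion $k_{n,i}=k_{n-1,i}+k_{n-1,i-2}$ is itself Pascal's identity applied to the closed forms of Theorem \ref{thm:anti-diagonal}, so no contradiction arises and the explicit value of $k_{n,i}$ could even be read off if one wished to sharpen the existence statement.
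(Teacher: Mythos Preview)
Your proposal is correct and follows essentially the same approach as the paper: induction on $n$ with base case $n=0$, feeding the induction hypothesis into the three-term recursion of Lemma~\ref{lem:LT}, and collapsing the two $S^{n-1}_{i-2}$ terms via Pascal's identity to obtain $k_{n,i}=k_{n-1,i}+k_{n-1,i-2}$. Your version is somewhat more careful than the paper's, unifying the parity cases through $m_i=\lfloor i/2\rfloor-1$ and explicitly addressing the boundary issues (the nonexistent $S^{n-1}_i$ when $i>2n+1$, and the $j$ outside the valid range) that the paper's proof passes over in silence.
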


\begin{proof}
We give a proof by induction on $n$. First, it is trivial to verify the case $n=0$. For $n\ge 1$, $i\ge 2$, we can apply lemma \ref{lem:LT} to get
\[LT(d(S^n_i,j))=LT(d(S^{n-1}_i,j))+LT(d(S^{n-1}_{i-2},j))+LT(d(S^{n-1}_{i-2},j-1)).\]
If  $i$ is even, by induction hypothesis, we have
\begin{equation}\nonumber
\begin{aligned}
    LT(d(S^n_i,j))&=LT(d(S^{n-1}_i,j))+LT(d(S^{n-1}_{i-2},j))+LT(d(S^{n-1}_{i-2},j-1))\\
    &=k_{n-1,i}C(\frac{i}{2}-1,j)+k_{n-1,i-2}C(\frac{i-2}{2}-1,j)+k_{n-1,i-2}C(\frac{i-2}{2}-1,j-1)\\
    &=k_{n-1,i}C(\frac{i}{2}-1,j)+k_{n-1,i-2}C(\frac{i-2}{2},j)\\
    &=[k_{n-1,i}+k_{n-1,i-2}]C(\frac{i}{2}-1,j).
\end{aligned}
\end{equation}
Similar, if $i$ is odd, we have
\begin{equation}\nonumber
\begin{aligned}
    LT(d(S^n_i,j))&=LT(d(S^{n-1}_i,j))+LT(d(S^{n-1}_{i-2},j))+LT(d(S^{n-1}_{i-2},j-1))\\
    &=k_{n-1,i}C(\frac{i-1}{2}-1,j)+k_{n-1,i-2}C(\frac{i-3}{2}-1,j)+k_{n-1,i-2}C(\frac{i-3}{2}-1,j-1)\\
    &=k_{n-1,i}C(\frac{i-1}{2}-1,j)+k_{n-1,i-2}C(\frac{i-3}{2},j)\\
    &=[k_{n-1,i}+k_{n-1,i-2}]C(\frac{i-1}{2}-1,j).
\end{aligned}
\end{equation}
\end{proof}

\begin{example}\label{exa:closed form}
The number of paths corresponding to $(7,5,7)$ is $N(7,5,7)=840$.
\end{example}

\begin{proof}
By definition, $N(7,5,7)=m^7_{5,7}$ and $m^7_{5,7}$ is the $3^{\rm{rd}}$ term of $S^7_{11}$ (see figure $15$). Theorem \ref{thm:anti-diagonal} says that $LT(S^7_{11})=C(8,3)=56$, and ratio of leading terms of difference sequences of $S^7_{11}$ is  $1:4:6:4:1$ by theorem \ref{thm:LT}. So we get a diagram of difference sequences in figure $16$ where the question mark is $m^7_{5,7}$. Finally, $S^7_{11}$ is an arithmetic sequence of order $4$, which implies $m^7_{5,7}=56+224\times 2+336=840$.
\end{proof}

\begin{figure}[htp]
\begin{center}
\begin{minipage}[t]{0.48\textwidth}
\centering
\includegraphics[width=6cm]{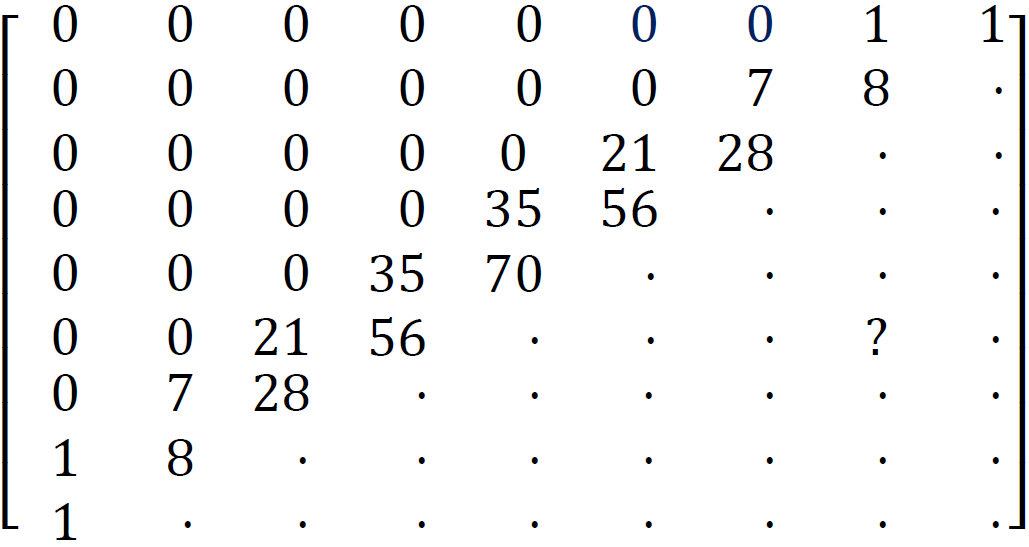}
\caption{The matrix $M_7$}
\end{minipage}
\begin{minipage}[t]{0.48\textwidth}
\centering
\includegraphics[width=4.2cm]{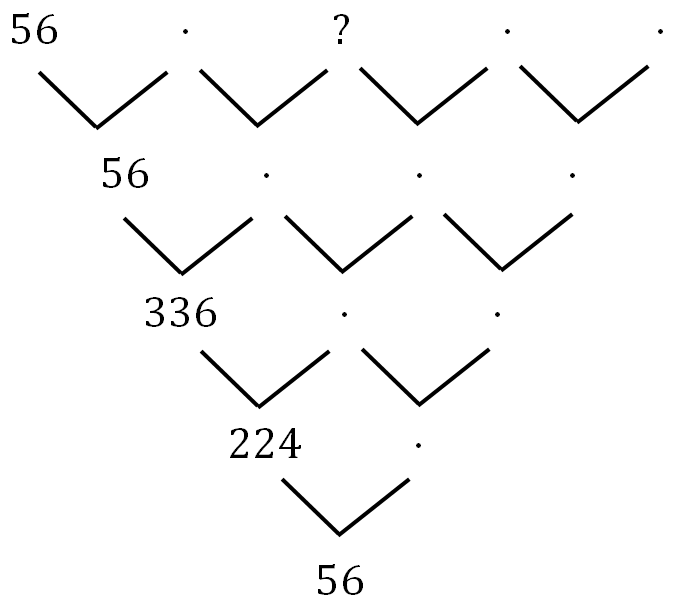}
\caption{Difference sequences of $S^7_{11}$}
\end{minipage}
\end{center}
\end{figure}

Generalizing the method in example \ref{exa:closed form}, we obtain the following closed form of $N(v)$:
\begin{theorem}
\label{thm:closed form}
For integers $n,i,j\ge 0$, the number of paths corresponding to $(n,i,j)$ is
\[N(n,i,j)=m^n_{i,j}=C(k,\frac{k+j-i}{2})\times C(\lfloor{\frac{\ell-2}{2}}\rfloor+\lfloor{\frac{i+j-k}{2}}\rfloor,\lfloor\frac{\ell-2}{2}\rfloor)\]
where $k=n-1+\gcd(2,i+j+n+1)$, $\ell=k+j-i+\gcd(2,i+j+n)$, and $C(\alpha,-1)=1$ if $\alpha =-1$ and $C(\alpha,-1)=0$ if $\alpha\neq -1$.
\end{theorem}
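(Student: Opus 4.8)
The plan is to carry out the computation of Example \ref{exa:closed form} in full generality, i.e. to reconstruct the single entry $m^n_{i,j}$ from the leading-term data already assembled. First I would locate $m^n_{i,j}$ among the diagonal sequences. A direct index computation shows that $m^n_{i,j}$ is a term of the sequence $S^n_s$ parallel to the diagonal $j-i=\text{const}$, where
\[
s=(j-i)+n+2,
\]
and that it sits at offset $t=\lfloor (i+j-n)/2\rfloor$ from the leading term $LT(S^n_s)$. The parity of $s$ — equivalently of $i+j+n$ — records whether the leading term lies on the anti-diagonal $a+b=n$ or $a+b=n+1$, so $LT(S^n_s)$ is read off from Theorem \ref{thm:anti-diagonal}. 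By the theorem that $S^n_{2k+2}$ and $S^n_{2k+3}$ are arithmetic sequences of order $k$, the sequence $S^n_s$ is an arithmetic sequence of order
\[
q=\lfloor s/2\rfloor-1=\lfloor (j-i+n)/2\rfloor .
\]

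The core step is the Newton forward-difference representation of a term of an arithmetic sequence of order $q$: since the $(q{+}1)$-st difference sequence of $S^n_s$ vanishes, its $(t{+}1)$-st term is
\[
m^n_{i,j}=\sum_{r=0}^{q}C(t,r)\,LT(d(S^n_s,r)).
\]
I would then substitute Theorem \ref{thm:LT}. Its constant is $k_{n,s}=LT(d(S^n_s,0))=LT(S^n_s)$, and the upper binomial index occurring there equals the order $q$ in both parities, so $LT(d(S^n_s,r))=LT(S^n_s)\,C(q,r)$. Vandermonde's identity $\sum_{r=0}^{q}C(t,r)C(q,r)=C(t+q,q)$ then collapses the sum to the compact form
\[
m^n_{i,j}=LT(S^n_s)\cdot C(t+q,q).
\]

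What remains is to match this compact expression to the stated closed form, which is a bookkeeping exercise across the two parities. One checks that $\gcd(2,i+j+n+1)$ makes $k$ equal to $n+1$ or $n$ according as $i+j+n$ is odd or even, so that $C\!\left(k,\tfrac{k+j-i}{2}\right)$ is exactly $LT(S^n_s)$ from Theorem \ref{thm:anti-diagonal}; that $\ell=n+j-i+2$ in either case, whence $\lfloor(\ell-2)/2\rfloor=q$; and that $\lfloor(i+j-k)/2\rfloor=t$ once the parity of $i+j+n$ is invoked, making the second factor $C(t+q,q)$. The single exceptional sequence $S^n_1$ has order $q=-1$, and this is precisely the case the convention $C(\alpha,-1)$ is built for: it forces the second factor to be $1$ at the unique nonzero position and $0$ otherwise, recovering $LT(S^n_1)=1$. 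The main obstacle is not any one hard step but keeping the two-parity indexing of $(s,t,q)$ and the $\gcd$/floor expressions mutually consistent, together with verifying the low-order boundary sequences by hand; the substantive mathematical inputs — the Newton forward-difference representation and the Vandermonde collapse — are routine once the indexing is pinned down.
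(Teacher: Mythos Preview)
Your proposal is correct and follows essentially the same route as the paper: locate $m^n_{i,j}$ on the diagonal sequence $S^n_\ell$, use the arithmetic-sequence order together with Theorem~\ref{thm:LT} to write the entry as $\sum_r C(t,r)\,LT(S^n_\ell)\,C(q,r)$, collapse via Vandermonde, and then match the parity-dependent indices to the stated $k,\ell$ formulas. Your write-up is in fact slightly more explicit than the paper's about the Newton forward-difference step and about the $q=-1$ boundary case, but the argument is the same.
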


\begin{proof}
In the matrix $M_n$, by theorem \ref{thm:anti-diagonal}, the numbers of the first two nonzero sequences parallel to the anti-diagonal are $C(n,t)$, $t\in\{0,1,\cdots,n\}$ and $C(n+1,s)$, $s\in\{0,1\cdots,n+1\}$ respectively. Suppose $m^n_{i,j}$ is in $S^n_{\ell}$, $\ell\ge 2$, then the leading term $LT(S^n_{\ell})=m^n_{x,y}$ satisfies
\[2 \mid [(x+y)-(i+j)].\]
Hence $LT(S^n_{\ell})=C(k,(k+j-i)/2)$ where
\[k=n-1+\gcd(2,i+j+n+1) \quad\textrm{and}\quad \ell=k+j-i+\gcd(2,i+j+n).\]
Finally, since $S^n_{\ell}$ is an arithmetic sequence of order $\lfloor(\ell-2)/2\rfloor$ and $m^n_{i,j}$ is the $\{\lfloor(i+j-k)/2\rfloor+1\}^{\rm{th}}$ term of $S^n_{\ell}$, we have
\begin{equation}\nonumber
\begin{aligned}
    m^n_{i,j}&=C(k,\frac{k+j-i}{2})\sum_{p\ge 0}C(\lfloor{\frac{i+j-k}{2}}\rfloor,p)\times C(\lfloor{\frac{\ell-2}{2}}\rfloor,\lfloor{\frac{\ell-2}{2}}\rfloor-p)\\
    &=C(k,\frac{k+j-i}{2})\times C(\lfloor{\frac{\ell-2}{2}}\rfloor+\lfloor{\frac{i+j-k}{2}}\rfloor,\lfloor\frac{\ell-2}{2}\rfloor).
\end{aligned}
\end{equation}
\end{proof}

\section{Existence of a path corresponding to a given vector}
Given a vector $(x,y,z)\in\mathbb{Z}^3$, there is a path corresponding to $(x,y,z) $ if and only if $m^x_{y,z}=N(x,y,z)\neq 0$. So we can use theorem \ref{thm:closed form} to determine whether there is a path corresponding to a given vector or not. However, in this section, we will give a better criterion.

\begin{definition}
For every alternating word $w$, $P_w$ denote the path corresponding to $w$. 
\end{definition}

\begin{definition}
For alternating words $w_1,w_2$, we say $P_{w_1}$ and $P_{w_2}$ are \emph{connectable} if $w_1 w_2$ is an alternating word. In this case, we denote the path corresponding to $w_1 w_2$ by $[P_{w_1},P_{w_2}]$.
\end{definition}

\begin{lemma}\label{lem:connected}
The following pairs of paths are connectable:
\[(P_{21},P_{32}),\ (P_{32},P_{31}),\ (P_{21},P_{21}),\ (P_{32},P_{32}),\ (P_{31},P_{31}),\ (P_{21},P_{31}).\]
\end{lemma}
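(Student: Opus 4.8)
The plan is to verify connectability directly from the definition, reducing each of the six cases to a single inequality checked at the junction of the two concatenated words. Recall that a word $u_1 u_2 \cdots u_k$ is down-up alternating when $u_1 > u_2 < u_3 > u_4 < \cdots$, so that odd-indexed letters are local maxima and even-indexed letters are local minima. Each of the words appearing in the pairs---namely $21$, $32$, and $31$---has length two and is a single descent, hence is trivially alternating on its own.

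First I would observe that because every word in the list has even length, concatenating two of them preserves position parity: the first letter of the second word lands in an odd position of the combined word and must therefore be a local maximum, while the preceding letter (the last letter of the first word) sits in an even position and must be a local minimum. Consequently, for a pair $(P_{ab}, P_{cd})$ with $a > b$ and $c > d$, the concatenation $abcd$ is down-up alternating if and only if the single junction condition $b < c$ holds, since the two internal descents $a > b$ and $c > d$ are already guaranteed by the hypothesis that $ab$ and $cd$ are themselves alternating.

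Then I would check this one inequality for each of the six pairs. For $(P_{21},P_{32})$, $(P_{21},P_{21})$, $(P_{31},P_{31})$, and $(P_{21},P_{31})$ the junction letter $b$ equals $1$ and the next letter $c$ lies in $\{3,2\}$, so $b < c$; for $(P_{32},P_{31})$ and $(P_{32},P_{32})$ the junction is $2 < 3$. In every case the inequality holds, so each concatenation is down-up alternating, and by the definition of connectability the corresponding paths are connectable; the bracket notation $[P_{w_1},P_{w_2}]$ is then well defined.

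There is no substantive obstacle here; the only point requiring care is the parity bookkeeping at the junction. If the component words had odd length, concatenation would shift the required alternation pattern and one would have to verify an additional descent rather than an ascent at the seam. The uniformity of the present list---all words of length two---is exactly what makes the single junction inequality both necessary and sufficient, and this is the observation I would isolate before running through the six checks.
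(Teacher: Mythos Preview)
Your proof is correct and takes essentially the same approach as the paper: direct verification that each concatenated word is down-up alternating. The paper simply lists the six words $2132,3231,2121,3232,3131,2131$ and asserts they are alternating (with an accompanying figure), while you add the small organizing observation that only the junction inequality $b<c$ needs checking; this is the same verification, just packaged slightly more systematically.
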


\begin{proof}
$2132,3231,2121,3232,3131$, and $2131$ are alternating words. Figure $17$ gives the connected paths of these pairs in order. 
\end{proof}

\begin{figure}[htp]
    \centering
    \includegraphics[width=12cm]{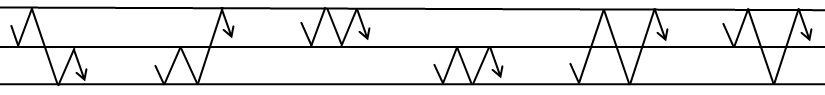}
    \caption{Connected paths of $6$ pairs of paths}
\end{figure}

\begin{theorem}
\label{thm:even case}
For $(a_1,a_2,a_3)\in\mathbb{Z}^3$ with $a_1,a_2,a_3\ge 0$ and $a_1+a_2+a_3$ being even, there is a path corresponding to $(a_1,a_2,a_3)$ if and only if
\begin{equation}
\begin{cases}
a_1+a_2\ge a_3\\
a_2+a_3\ge a_1\\
a_3+a_1\ge a_2.
\end{cases}
\label{cond:even}
\end{equation}
\end{theorem}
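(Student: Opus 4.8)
The plan is to reduce the problem to a bijective decomposition of even-length alternating words into length-two \emph{descent} blocks. Since we work over the alphabet $\{1,2,3\}$ and an alternating word is down-up, $w_1>w_2<w_3>w_4<\cdots$, the only length-two descents available are $21$, $32$, and $31$, i.e.\ the words of $P_{21}$, $P_{32}$, $P_{31}$. Given any path corresponding to $(a_1,a_2,a_3)$ with $a_1+a_2+a_3$ even, its word $w$ has even length and splits canonically at the even positions into consecutive pairs $(w_1w_2)(w_3w_4)\cdots$; each pair $w_{2i-1}w_{2i}$ is a descent, hence one of $21,32,31$. Writing $x,y,z$ for the numbers of blocks equal to $21,32,31$ respectively and counting occurrences of each letter, I obtain the linear system
\[
a_1=x+z,\qquad a_2=x+y,\qquad a_3=y+z,
\]
since $1$ appears in $21$ and $31$, the letter $2$ in $21$ and $32$, and the letter $3$ in $32$ and $31$.

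For necessity, the counts $x,y,z$ produced above are automatically nonnegative integers. Solving the system gives $2x=a_1+a_2-a_3$, $2y=a_2+a_3-a_1$, $2z=a_3+a_1-a_2$, so $x,y,z\ge 0$ forces exactly the three inequalities in \eqref{cond:even}. This direction is essentially immediate once the block decomposition is in place.

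For sufficiency, I start from \eqref{cond:even} together with $a_1+a_2+a_3$ even: the three quantities $a_1+a_2-a_3$, $a_2+a_3-a_1$, $a_3+a_1-a_2$ are then nonnegative and even, so $x,y,z$ defined by the formulas above are nonnegative integers satisfying the system. It remains to assemble $x$ copies of $21$, $y$ copies of $32$, and $z$ copies of $31$ into a single alternating word, and I would concatenate them in the fixed order
\[
\underbrace{21\cdots21}_{x}\ \underbrace{32\cdots32}_{y}\ \underbrace{31\cdots31}_{z}.
\]
A chain of descent blocks is alternating precisely when every pair of consecutive blocks is connectable, because the internal ``$>$'' relations hold inside each block and the only remaining constraints are the ``$<$'' relations at the junctions. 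In the chosen order the consecutive-block types that occur are $(21,21)$, $(21,32)$, $(32,32)$, $(32,31)$, $(31,31)$, and, when $y=0$, $(21,31)$; every one of these is listed in Lemma \ref{lem:connected}. Hence the concatenation is a genuine alternating word whose associated path starts at the first plate (its first letter is $2$ or $3$), and by the letter counts its vector is $(x+z,x+y,y+z)=(a_1,a_2,a_3)$, so a path exists.

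The one genuinely delicate point is the choice of ordering in the sufficiency direction: not every arrangement of the blocks is alternating, since a block $32$ followed directly by a block $21$ fails to connect (its junction would require $2<2$). The order above is designed so that this forbidden transition never appears, which is exactly why Lemma \ref{lem:connected} records precisely the six connectable pairs needed here. The remaining bookkeeping---the integrality and parity of $x,y,z$, and the degenerate cases where one or more of $x,y,z$ vanish (including the empty word when $a_1=a_2=a_3=0$)---is routine.
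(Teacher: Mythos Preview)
Your proof is correct and follows essentially the same approach as the paper: both directions rest on the decomposition of an even-length down-up word over $\{1,2,3\}$ into the three descent blocks $21,32,31$, leading to the same linear system $a_1=x+z$, $a_2=x+y$, $a_3=y+z$ and the same appeal to Lemma~\ref{lem:connected} for the construction. Your write-up is in fact slightly more explicit than the paper's, since you spell out the block order $21\cdots21\,32\cdots32\,31\cdots31$ and note why the forbidden junction $(32,21)$ is avoided.
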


\begin{proof}
Suppose $(a_1,a_2,a_3)$ is a vector satisfies the above condition. Consider the equation
\[ x(1,1,0)+y(0,1,1)+z(1,0,1)=(a_1,a_2,a_3)\]
and its only solution
\[\begin{cases}
x=(a_1+a_2-a_3)/2\\
y=(-a_1+a_2+a_3)/2\\
z=(a_1-a_2+a_3)/2.
\end{cases}\]
The hypothesis implies that $x,y,z$ are nonnegative integers. By lemma \ref{lem:connected}, we can construct a path by connecting $x$ copies of $P_{21}$, $y$ copies of $P_{32}$ and $z$ copies of $P_{31}$.

Conversely, let $P$ be a path with $2k$ reflections and $v(P)=(a_1,a_2,a_3)$. Notice that $P$ can be written as the form
\[[P_{w_1},P_{w_2},\cdots,P_{w_k}].\]  
where $w_i$ is an alternating word of length $2$ over alphabet $\{1,2,3\}$. Since $v(P_{w_i})=(1,1,0)$ or $(0,1,1)$ or $(1,0,1)$, there are integers $x,y,z\ge 0$ such that
\[(a_1,a_2,a_3)=x(1,1,0)+y(0,1,1)+z(1,0,1).\]
Hence
\[\begin{cases}
a_1+a_2=2x+y+z\ge y+z=a_3\\
a_2+a_3=x+2y+z\ge x+z=a_1\\
a_1+a_3=x+y+2z\ge x+y=a_2.
\end{cases}\]
\end{proof}

\begin{corollary}\label{thm:odd case}
For $(a_1,a_2,a_3)\in\mathbb{Z}^3$ with $a_1,a_2,a_3\ge 0$ and $a_1+a_2+a_3$ being odd, there is a path corresponding to $(a_1,a_2,a_3)$ if and only if one of the followings holds
\begin{enumerate}
    \item $\begin{cases}
           a_1+a_2\ge a_3\\
           a_2+a_3\ge a_1\\
           a_3+a_1\ge a_2.
           \end{cases}$ 
    \item There exists a constant $\alpha\ge 0$ such that $(a_1,a_2,a_3)=(\alpha,\alpha+1,0)$.
\end{enumerate}
\end{corollary}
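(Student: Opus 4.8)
The plan is to reduce the odd case to the even case of Theorem~\ref{thm:even case} by peeling off a single reflection, exploiting the fact that a down-up alternating word of odd length $2k+1$ both begins and ends with a peak. Concretely, if $P$ corresponds to $w=w_1w_2\cdots w_{2k+1}$, then $w_1>w_2$ and $w_{2k}<w_{2k+1}$, so both the initial letter $w_1$ and the terminal letter $w_{2k+1}$ lie in $\{2,3\}$; moreover every occurrence of $1$ sits at a valley (even position) while every occurrence of $3$ sits at a peak (odd position). There are $k$ valleys and $k+1$ peaks, so the $1$'s are capped by $k$ and the $3$'s by $k+1$. This asymmetry is precisely what destroys the symmetric triangle description of the even case and is the reason the odd characterization needs an exceptional family rather than just the inequalities~(\ref{cond:even}).

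For sufficiency I would argue in two parts. If the triangle inequalities hold while $a_1+a_2+a_3$ is odd, I would subtract a single standard basis vector to land on an even vector that still satisfies the triangle inequalities, invoke Theorem~\ref{thm:even case} to realize that even vector as an ordered concatenation of the blocks $P_{21},P_{32},P_{31}$, and then reattach the peeled-off reflection as a terminal peak, using the connectability pairs of Lemma~\ref{lem:connected} to certify that the new junction is again alternating. For the exceptional family $(\alpha,\alpha+1,0)$ I would instead exhibit the explicit word $2(12)^{\alpha}$, whose peaks are all $2$ and whose valleys are all $1$; equivalently, since $a_3=0$ collapses the problem to two plates, Lemma~\ref{lem:D^2_1} already lists $(\alpha,\alpha+1)$ among the realizable vectors.

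For necessity I would take an arbitrary odd path, delete its terminal peak $c\in\{2,3\}$, and apply Theorem~\ref{thm:even case} to the resulting even vector. The two deletions are genuinely different: a terminal $3$ can always be removed, leaving $(a_1,a_2,a_3-1)$, whereas a terminal $2$ forces the preceding valley to equal $1$, which injects the extra constraint $a_1\ge 1$ and leaves $(a_1,a_2-1,a_3)$. Translating the even triangle inequalities back through each deletion tells one exactly how much each original inequality may be violated, and this is where I expect the main obstacle to lie: the case analysis must be carried out carefully enough to enumerate \emph{all} realizable non-triangular vectors and to match them against the claimed families, since the freedom to peel a $3$ but only the conditional freedom to peel a $2$ makes the two sides asymmetric. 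Pinning down this boundary --- determining precisely which vectors violating a single inequality survive, and confirming that nothing outside the listed families does --- is the crux of the proof and the step most likely to require additional bookkeeping beyond the even case.
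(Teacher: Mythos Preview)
Your reduction strategy---peel off the terminal peak and invoke Theorem~\ref{thm:even case}---is exactly the right move, and the paper itself offers no proof to compare against (the corollary is stated without argument).  The difficulty you anticipate in the final bookkeeping is real, but for a more serious reason than mere case analysis: the statement you are trying to prove is \emph{false} as written, so no amount of bookkeeping will close the argument.

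Concretely, the down--up word $313$ realizes the vector $(1,0,2)$, and more generally $3(13)^{\alpha}$ realizes $(\alpha,0,\alpha+1)$, while $3(23)^{\alpha}$ realizes $(0,\alpha,\alpha+1)$.  None of these vectors satisfies the triangle inequalities (since $a_1+a_2=a_3-1<a_3$), and none is of the form $(\alpha,\alpha+1,0)$.  Your own deletion argument exposes exactly this family: when the terminal peak is $3$, removing it yields $(a_1,a_2,a_3-1)$, and the even-case triangle inequalities for \emph{that} vector translate back only to $a_1+a_2\ge a_3-1$, not to $a_1+a_2\ge a_3$.  With odd total $2k+1$ this admits the boundary case $a_3=k+1$, i.e.\ all $k+1$ peaks equal to~$3$, and every valley pattern in $\{1,2\}^k$ then gives a legitimate path.

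Carrying your method through actually proves the \emph{correct} characterization: for odd sum $2k+1$ the realizable vectors are precisely those with $a_3\ge 1$ and $(a_1,a_2,a_3-1)$ satisfying the even-case inequalities (equivalently $a_1\le k$, $a_2\le k$, $a_3\le k+1$), together with the two-plate family $(\alpha,\alpha+1,0)$.  This repaired statement still yields $b^3_{2n+1}=\binom{n+2}{2}+1$, via the bijection $(a_1,a_2,a_3)\mapsto(a_1,a_2,a_3-1)$ onto the even-sum triangle vectors, so the subsequent theorem of the paper survives even though the corollary as stated does not.
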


Recall definition \ref{def}, $b^n_m$ is the number of distinct vectors corresponding to some paths with exactly $m$ reflections. Using \ref{thm:even case} and \ref{thm:odd case}, we obtain closed forms of the statistic $b^3_m$.

\begin{theorem}
For integer $n\ge 1$, we have
\begin{enumerate}
    \item $b^3_{2n}=C(n+2,2)$.
    \item $b^3_{2n+1}=C(n+2,2)+1$.
\end{enumerate}
\end{theorem}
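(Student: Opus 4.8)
The plan is to read $b^3_m$ off the word correspondence: $b^3_m$ is the number of vectors $(a_1,a_2,a_3)$ with $a_i\ge 0$ and $a_1+a_2+a_3=m$ for which $N(a_1,a_2,a_3)\neq 0$, that is, the number of letter-count vectors realized by some down-up alternating word of length $m$ over $\{1,2,3\}$. So in each parity the task is to enumerate the achievable vectors; I would handle the even length directly with Theorem \ref{thm:even case} and reduce the odd length to the even count through the word correspondence.

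For $m=2n$, Theorem \ref{thm:even case} makes a vector achievable exactly when the inequalities (\ref{cond:even}) hold; since $a_1+a_2+a_3=2n$, each inequality $a_i+a_j\ge a_k$ is equivalent to $a_k\le n$, so the achievable vectors are the lattice points of the box $0\le a_1,a_2,a_3\le n$ on the plane $a_1+a_2+a_3=2n$. I would count these by inclusion--exclusion: there are $C(2n+2,2)$ nonnegative solutions in all, from which I subtract the three events $a_i\ge n+1$, each contributing $C(n+1,2)$ via the shift $a_i\mapsto a_i-(n+1)$, with no pairwise overlaps because two coordinates $\ge n+1$ already sum to more than $2n$. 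Hence $b^3_{2n}=C(2n+2,2)-3C(n+1,2)$, which collapses to $C(n+2,2)$.

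For $m=2n+1$ I would split the achievable vectors by whether plate $3$ appears. If $a_3=0$ the realizing word uses only the letters $1,2$, and a down-up alternating word of odd length over a two-letter alphabet is forced to be $2121\cdots 2$; this gives the single vector $(n,n+1,0)$, which is exactly the exceptional family of Corollary \ref{thm:odd case} at $\alpha=n$. If $a_3\ge 1$ I would use the translation $v\mapsto v-e_3$, with $e_3=(0,0,1)$, and claim it is a bijection from the achievable odd vectors with $a_3\ge 1$ onto the achievable even vectors of sum $2n$. Surjectivity is easy: given any length-$2n$ word, appending a terminal ``$3$'' peak is always legal (its left neighbor is a valley, hence $\le 2<3$) and raises $a_3$ by one. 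Combining the two cases with the even count yields $b^3_{2n+1}=C(n+2,2)+1$.

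The main obstacle is showing this translation is well defined, i.e.\ that for an achievable odd $v$ with $a_3\ge 1$ the shifted vector $v-e_3$ is genuinely even-achievable; concretely, that $v$ can be realized by a word ending in ``$3$'', whose deletion leaves a legal length-$2n$ word with vector $v-e_3$. By Theorem \ref{thm:even case} this amounts to checking $a_2\le n$ (the bounds $a_1\le n$ and $a_3\le n+1$ being automatic, since an odd word has only $n$ valleys and $n+1$ peaks while plate $1$ occurs only at valleys and plate $3$ only at peaks). I would argue by peak/valley bookkeeping: a ``$2$''-peak must be flanked by ``$1$''-valleys and a ``$2$''-valley by ``$3$''-peaks, so the positions carrying a ``$2$'' form an independent set in the peak--valley path on $2n+1$ vertices. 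That path has a unique maximum independent set, namely its $n+1$ peaks; hence if $a_2\ge n+1$ the ``$2$''-positions would be forced to be all the peaks, giving $a_3=0$. Therefore $a_3\ge 1$ implies $a_2\le n$, the bijection goes through, and the even-case computation together with the $a_3=0$ tally are then routine.
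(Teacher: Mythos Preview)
Your treatment of the even case is exactly the paper's: reduce the three inequalities of Theorem~\ref{thm:even case} to $a_i\le n$ and apply inclusion--exclusion to get $C(2n+2,2)-3C(n+1,2)=C(n+2,2)$.

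For the odd case you take a genuinely different route. The paper simply writes ``similar argument works,'' which in context means invoking Corollary~\ref{thm:odd case} and running another inclusion--exclusion count. You instead split off the $a_3=0$ vector directly and then set up a bijection $v\mapsto v-e_3$ between odd-achievable vectors with $a_3\ge 1$ and even-achievable vectors of total $2n$. The nontrivial half of the bijection---that $a_3\ge 1$ forces $a_2\le n$---you handle by the independent-set observation on the path $P_{2n+1}$, which is correct: two consecutive letters cannot both equal $2$, and the odd path has the peak set as its \emph{unique} maximum independent set, so $a_2=n+1$ would force $a_3=0$. Surjectivity via appending a terminal $3$ is also fine, since the last position of an even word is a valley and hence at most $2$. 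This argument is self-contained (your passing reference to Corollary~\ref{thm:odd case} is only illustrative) and arguably cleaner than redoing inclusion--exclusion with the asymmetric odd constraints; it also makes the ``$+1$'' in the formula visibly correspond to the single exceptional vector $(n,n+1,0)$.
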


\begin{proof}
By theorem \ref{thm:even case}, vector $(a_1,a_2,a_3)$ is corresponding to a path with $2n$ reflections if and only if
\[\begin{cases}a_1+a_2\ge a_3\\
a_2+a_3\ge a_1\\
a_3+a_1\ge a_2
\end{cases}\]
which is equivalence to $a_i\le n$ for $i=1,2,3$. In this case, by the principle of inclusion and exclusion, the number of such vectors $(a_1,a_2,a_3)$ is  
\[C(2n+2,2)-3C(n+1,2)=(n+1)(2n+1)-3\frac{(n+1)n}{2}=C(n+2,2).\]
Similar argument works for $b^3_{2n+1}$.
\end{proof}

\medskip
\bibliographystyle{alpha}
\bibliography{references}
\end{document}